\theoremstyle{plain}
\newtheorem{Thm}{Theorem}[section]
\newtheorem{Lem}[Thm]{Lemma}
\newtheorem{Prop}[Thm]{Proposition}
\newtheorem{Cor}[Thm]{Corollary}
\theoremstyle{definition}
\newtheorem*{Defi}{Definition}
\title{On the partial order competition dimensions of chordal graphs\footnotetext{\textit{Email addresses:}
\texttt{gaouls@snu.ac.kr} (Jihoon CHOI),
\texttt{srkim@snu.ac.kr} (Suh-Ryung KIM),
\texttt{jungyeunlee@gmail.com} (Jung Yeun LEE),
\texttt{sano@cs.tsukuba.ac.jp} (Yoshio SANO)}}
\author[1]{Jihoon CHOI
\thanks{This research was supported by Global Ph.D Fellowship Program through the National Research Foundation of Korea (NRF)
funded by the Ministry of Education (No.\ NRF-2015H1A2A1033541).}}
\author[1]{Suh-Ryung KIM
\thanks{This work was partially supported by
the National Research Foundation of Korea (NRF) grant funded
by the Korea government (MEST) (No.\ NRF-2015R1A2A2A01006885).}}
\author[1]{Jung Yeun LEE}
\author[2]{Yoshio SANO
\thanks{This work was supported by JSPS KAKENHI Grant Number 15K20885.}}
\affil[1]{Department of Mathematics Education,
Seoul National University, Seoul 151-742, Republic of Korea}
\affil[2]{Division of Information Engineering,
Faculty of Engineering, Information and Systems, \newline
University of Tsukuba, Ibaraki 305-8573, Japan}
\begin{document}

\maketitle

\begin{abstract}
Choi {\it et al.}
[{J.~Choi, K.~S.~Kim, S.~-R.~Kim, J.~Y.~Lee, and Y.~Sano}:
{On the competition graphs of $d$-partial orders},
\emph{Discrete Applied Mathematics} (2015),
\texttt{http://dx.doi.org/10.1016/j.dam.2015.11.004}]
introduced the notion of the partial order competition dimension of a graph.
It was shown that
complete graphs, interval graphs, and trees, which are chordal graphs, have
partial order competition dimensions at most three.

In this paper, we study the partial order competition dimensions of chordal graphs.
We show that
chordal graphs have partial order competition dimensions at most three
if the graphs are diamond-free.
Moreover, we also show the existence of
chordal graphs containing diamonds
whose partial order competition dimensions are greater than three.
\end{abstract}


\noindent
{\bf Keywords:}
competition graph, $d$-partial order,
partial order competition dimension,
chordal graph, block graph

\noindent
{\bf 2010 Mathematics Subject Classification:} 05C20, 05C75

\section{Introduction}

The \emph{competition graph} $C(D)$ of a digraph $D$ is an undirected graph which
has the same vertex set as $D$ and which has an edge $xy$
between two distinct vertices $x$ and $y$
if and only if for some vertex $z \in V$,
the arcs $(x,z)$ and $(y,z)$ are in $D$.

Let $d$ be a positive integer.
For $x = (x_1,x_2,\ldots, x_d)$,
$y = (y_1,y_2,\ldots, y_d) \in \mathbb{R}^d$,
we write
$x \prec y$
if $x_i<y_i$ for each $i=1, \ldots, d$.
For a finite subset $S$ of $\mathbb{R}^d$,
let $D_S$ be the digraph defined by $V(D_S) = S$ and
$A(D_S) = \{(x,v) \mid v, x \in S,
v \prec x \}$.
A digraph $D$ is called a \emph{$d$-partial order}
if there exists a finite subset $S$ of $\mathbb{R}^d$
such that $D$ is isomorphic to the digraph $D_S$.
A $2$-partial order is also called a \emph{doubly partial order}.
Cho and Kim~\cite{CK05} studied the competition graphs of doubly partial orders
and showed that interval graphs are exactly the
graphs having partial order competition dimensions at most two.
Several variants of competition graphs of doubly partial orders
also have been studied
(see \cite{KKR07, KLPPS09, KLPS14, LW09, PLK11, PS13, WL10}).

Choi {\it et al.} \cite{pocdim} introduced the notion of the partial order competition dimension of a graph.

\begin{Defi}
For a graph $G$,
the \emph{partial order competition dimension} of $G$, denoted by $\dim_{\text{{\rm poc}}}(G)$,
is the smallest nonnegative integer $d$
such that $G$ together with $k$ isolated vertices is the competition graph of
a $d$-partial order $D$ for some nonnegative integer $k$,
i.e.,
\[
\dim_{\text{{\rm poc}}}(G) :=
\min \{d \in \mathbb{Z}_{\geq 0} \mid
\exists k \in \mathbb{Z}_{\geq 0}, \exists S \subseteq \mathbb{R}^d
\text{ s.t. } G \cup I_k = C(D_S) \},
\]
where $\mathbb{Z}_{\geq 0}$ is the set of nonnegative integers
and $I_k$ is a set of $k$ isolated vertices.
\end{Defi}

Choi {\it et al.} \cite{pocdim} studied graphs
having small partial order competition dimensions,
and gave characterizations of graphs with partial order competition dimension $0$, $1$, or $2$
as follows.

\begin{Prop}\label{prop:dim0}
Let $G$ be a graph.
Then, $\dim_{\text{{\rm poc}}}(G)= 0$ if and only if $G = K_1$.
\end{Prop}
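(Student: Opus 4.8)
The plan is to exploit the degeneracy of the ambient space $\mathbb{R}^0$. The first and crucial observation I would record is that $\mathbb{R}^0$ (the empty Cartesian product) consists of a single point, so any finite subset $S \subseteq \mathbb{R}^0$ satisfies $|S| \leq 1$. Consequently the digraph $D_S$ has at most one vertex, and therefore its competition graph $C(D_S)$ has at most one vertex and no edges, since a competition edge requires two \emph{distinct} vertices with a common prey. Everything in both directions will follow from this single structural fact.

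For the ``if'' direction, I would take $G = K_1$ and exhibit a witnessing $0$-partial order explicitly. Choosing $S$ to be the unique point of $\mathbb{R}^0$ yields a one-vertex digraph whose competition graph is $K_1$, so that $K_1 \cup I_0 = C(D_S)$ and hence $\dim_{\text{{\rm poc}}}(K_1) \leq 0$. As the partial order competition dimension is by definition a nonnegative integer, this forces $\dim_{\text{{\rm poc}}}(K_1) = 0$.

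For the ``only if'' direction, I would assume $\dim_{\text{{\rm poc}}}(G) = 0$, so that by definition there exist $k \in \mathbb{Z}_{\geq 0}$ and $S \subseteq \mathbb{R}^0$ with $G \cup I_k = C(D_S)$. By the opening observation $C(D_S)$ has at most one vertex, hence so does $G \cup I_k$. Since a graph has at least one vertex, $G \cup I_k$ has exactly one vertex, which forces $k = 0$ and $|V(G)| = 1$; the only such graph is $K_1$, giving $G = K_1$.

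I do not anticipate a genuine obstacle here: the statement is essentially an unwinding of the definition in the boundary case $d = 0$. The only point requiring care is the convention that $\mathbb{R}^0$ is a single point, from which the bound $|S| \leq 1$ and the absence of competition edges both follow immediately.
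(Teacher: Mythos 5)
Your proof is correct. Note that the paper itself offers no proof of this proposition --- it is quoted from Choi \emph{et al.}~\cite{pocdim} as Proposition~\ref{prop:dim0} --- so there is nothing internal to compare against; your argument is the natural (and essentially only) one, namely unwinding the definition in the degenerate case $d=0$: any finite $S \subseteq \mathbb{R}^0$ has at most one element, so $C(D_S)$ has at most one vertex and no edges, which settles both directions once one adopts the standard convention that a graph $G$ is nonempty. The only micro-subtlety worth being aware of is that for $d=0$ the condition ``$x_i < y_i$ for each $i$'' is vacuously true, so the unique point $p$ of $\mathbb{R}^0$ formally satisfies $p \prec p$ and $D_S$ acquires a loop; this is harmless, since a competition edge requires two \emph{distinct} vertices with a common prey, but it is worth a parenthetical remark if you want the write-up to be airtight.
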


\begin{Prop}\label{prop:dim1}
Let $G$ be a graph.
Then, $\dim_{\text{{\rm poc}}}(G)= 1$ if and only if
$G = K_{t+1}$ or
$G = K_t \cup K_1$ for some positive integer $t$.
\end{Prop}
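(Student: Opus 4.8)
The plan is to first pin down exactly which graphs occur as $C(D_S)$ when $S$ is a finite subset of $\mathbb{R}$, and then deduce the characterization by keeping track of isolated vertices. First I would fix $S = \{s_1, \dots, s_n\}$ with $s_1 < s_2 < \cdots < s_n$. Since $d=1$, the relation $v \prec s_i$ simply means $v < s_i$, so in $D_S$ each vertex has out-neighbors precisely the elements of $S$ strictly below it. Two distinct vertices $s_i$ and $s_j$ are therefore adjacent in $C(D_S)$ if and only if there is a common $z \in S$ with $z < s_i$ and $z < s_j$. The key observation is that this common prey can always be taken to be the minimum $s_1$ whenever $s_i, s_j \neq s_1$, while $s_1$ itself has no out-neighbor and hence is isolated. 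I would record this as a lemma: $C(D_S) = K_{n-1} \cup K_1$, where the $K_1$ is the vertex $s_1$ and the clique $K_{n-1}$ is induced on $S \setminus \{s_1\}$ (with the conventions that $n=1$ gives $K_1$ and $n=2$ gives $K_1 \cup K_1$). This structural fact is the heart of the argument and the step I expect to require the most care, mainly in handling the small cases $n \le 2$ correctly.

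For the ``if'' direction I would exhibit explicit point sets. If $G = K_{t+1}$, choosing $|S| = t+2$ gives $C(D_S) = K_{t+1} \cup K_1 = G \cup I_1$, so $\dim_{\text{{\rm poc}}}(G) \le 1$; if $G = K_t \cup K_1$, choosing $|S| = t+1$ gives $C(D_S) = K_t \cup K_1 = G$, so $\dim_{\text{{\rm poc}}}(G) \le 1$. In both cases $G$ has at least two vertices, so $G \neq K_1$ and Proposition \ref{prop:dim0} rules out dimension $0$; hence $\dim_{\text{{\rm poc}}}(G) = 1$.

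For the ``only if'' direction, suppose $\dim_{\text{{\rm poc}}}(G) = 1$, so $G \cup I_k = C(D_S) = K_{n-1} \cup K_1$ for some $S \subseteq \mathbb{R}$ with $|S| = n$ and some $k \ge 0$. Since deleting the $k$ added vertices can only remove isolated vertices of $K_{n-1} \cup K_1$, the nontrivial component $K_{n-1}$ (when $n \ge 3$) must survive inside $G$, and the only vertex available for deletion is $s_1$. A short case analysis on $n$ and $k$ then finishes: for $n \ge 3$, the choice $k = 0$ yields $G = K_{n-1} \cup K_1$ (of the form $K_t \cup K_1$ with $t = n-1$) and $k=1$ yields $G = K_{n-1}$ (of the form $K_{t+1}$ with $t = n-2$); for $n = 2$, the choice $k = 0$ yields $G = K_1 \cup K_1$ (the case $t = 1$), while the remaining possibilities force $G = K_1$ or $G$ to have fewer than one vertex, neither of which has dimension one and both of which are excluded by the hypothesis $\dim_{\text{{\rm poc}}}(G) = 1$. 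Collecting the surviving cases shows $G = K_{t+1}$ or $G = K_t \cup K_1$ for some positive integer $t$, completing the proof.
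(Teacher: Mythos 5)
Your proposal is correct. Note, however, that the paper does not prove Proposition~\ref{prop:dim1} at all: it is imported verbatim from the reference \cite{pocdim}, so there is no in-paper argument to compare against. Your argument is a sound, self-contained proof along the natural lines: the structural observation that for $S=\{s_1<\cdots<s_n\}\subseteq\mathbb{R}$ the minimum $s_1$ is the universal prey, so $C(D_S)=K_{n-1}\cup K_1$ (with the right small-case conventions), is exactly the right key fact; the ``if'' direction then follows from explicit point sets together with Proposition~\ref{prop:dim0} to exclude dimension $0$, and the ``only if'' direction from the observation that only isolated vertices of $K_{n-1}\cup K_1$ can be deleted, with the degenerate cases $n\le 2$ handled correctly by invoking the hypothesis $\dim_{\text{\rm poc}}(G)=1$ to discard $G=K_1$ and the empty graph.
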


\begin{Prop}\label{prop:dim2}
Let $G$ be a graph.
Then, $\dim_{\text{{\rm poc}}}(G) = 2$
if and only if
$G$ is an interval graph which is neither $K_s$ nor $K_t \cup K_1$
for any positive integers $s$ and $t$.
\end{Prop}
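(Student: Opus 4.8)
The plan is to obtain the characterization as an immediate corollary of the three facts already available in the excerpt: the Cho--Kim theorem~\cite{CK05}, which states that $\dim_{\text{{\rm poc}}}(G) \le 2$ if and only if $G$ is an interval graph, together with Propositions~\ref{prop:dim0} and~\ref{prop:dim1}, which pin down exactly the graphs of dimension $0$ and $1$. No fresh construction of a partial order is required; the entire argument amounts to intersecting these classes correctly.

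First I would consolidate Propositions~\ref{prop:dim0} and~\ref{prop:dim1} into a single statement: a graph $G$ satisfies $\dim_{\text{{\rm poc}}}(G) \le 1$ if and only if $G = K_s$ for some positive integer $s$ or $G = K_t \cup K_1$ for some positive integer $t$. Indeed, $\dim_{\text{{\rm poc}}}(G) = 0$ forces $G = K_1$, which is $K_s$ with $s = 1$, while $\dim_{\text{{\rm poc}}}(G) = 1$ forces $G = K_{t+1}$ (a complete graph $K_s$ with $s \ge 2$) or $G = K_t \cup K_1$. Merging the two lists absorbs the solitary $K_1$ into the family $\{K_s : s \ge 1\}$, giving the combined class as stated.

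For the forward direction I would argue that if $\dim_{\text{{\rm poc}}}(G) = 2$, then $\dim_{\text{{\rm poc}}}(G) \le 2$, so $G$ is an interval graph by~\cite{CK05}; and since $\dim_{\text{{\rm poc}}}(G)$ is neither $0$ nor $1$, the consolidated statement above shows that $G$ is neither $K_s$ nor $K_t \cup K_1$, which is the required conclusion. For the reverse direction I would assume $G$ is an interval graph that is neither $K_s$ nor $K_t \cup K_1$: being interval yields $\dim_{\text{{\rm poc}}}(G) \le 2$ by~\cite{CK05}, while the contrapositive of the consolidated statement yields $\dim_{\text{{\rm poc}}}(G) \ge 2$, whence $\dim_{\text{{\rm poc}}}(G) = 2$.

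Since each implication is a direct membership argument, I expect no genuine analytic difficulty here. The one point demanding care is the bookkeeping of the complete-graph families across Propositions~\ref{prop:dim0} and~\ref{prop:dim1}: the graph $K_1$ has dimension $0$ in the former and would be the degenerate member $K_{t+1}$ (were $t$ allowed to equal $0$) of the latter. To avoid an off-by-one gap in the exclusion list, I would state explicitly how $\{K_1\} \cup \{K_{t+1} : t \ge 1\}$ collapses to $\{K_s : s \ge 1\}$ before invoking the contrapositive.
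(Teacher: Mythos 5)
Your derivation is correct: given the Cho--Kim characterization ($\dim_{\text{\rm poc}}(G)\le 2$ iff $G$ is an interval graph) together with Propositions~\ref{prop:dim0} and~\ref{prop:dim1}, the equality case $\dim_{\text{\rm poc}}(G)=2$ follows by exactly the set-theoretic bookkeeping you describe, and your care with merging $K_1$ into the family $\{K_s : s\ge 1\}$ is the only delicate point. Note that the paper itself states this proposition without proof, citing Choi \emph{et al.}~\cite{pocdim}, so there is no in-text argument to compare against; your proof is the natural deduction from the results the paper quotes and is consistent with how those sources establish it.
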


\noindent
Choi {\it et al.} \cite{pocdim} also gave some families of graphs with partial order competition dimension three.

\begin{Prop}\label{prop:cycle}
If $G$ is a cycle of length at least four, then
$\dim_{\text{{\rm poc}}}(G) = 3$.
\end{Prop}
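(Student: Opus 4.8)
The plan is to prove the two inequalities $\dim_{\mathrm{poc}}(G) \ge 3$ and $\dim_{\mathrm{poc}}(G) \le 3$ separately, where $G = C_n$ is the cycle on vertices $v_1,\dots,v_n$ (with $v_iv_{i+1}$ an edge, indices read modulo $n$) and $n \ge 4$. Throughout I use the fact that two points are adjacent in $C(D_S)$ precisely when some point of $S$ lies strictly below both of them in the order $\prec$, and that a point below which no point of $S$ lies has out-degree $0$ and is hence isolated in $C(D_S)$.

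For the lower bound I would simply rule out the values $0$, $1$, and $2$ using the characterizations already established. Since $C_n$ has at least four vertices it is not $K_1$, so Proposition~\ref{prop:dim0} gives $\dim_{\mathrm{poc}}(G)\neq 0$. Since $C_n$ is connected, has no isolated vertex, and is not complete, it is neither $K_{t+1}$ nor $K_t\cup K_1$, so Proposition~\ref{prop:dim1} gives $\dim_{\mathrm{poc}}(G)\neq 1$. Finally, a cycle of length at least four is an induced cycle without a chord, hence not chordal and in particular not an interval graph; by Proposition~\ref{prop:dim2} this forces $\dim_{\mathrm{poc}}(G)\neq 2$. Together these yield $\dim_{\mathrm{poc}}(G)\ge 3$.

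The substance of the argument is the upper bound, for which I would exhibit an explicit finite set $S\subseteq\mathbb{R}^3$ with $C(D_S)=C_n\cup I_n$. I would place $n$ \emph{vertex points} and $n$ \emph{prey points}:
\[
v_i = (i,\,-i,\,f(i)) \quad (1\le i\le n), \qquad f(1)=f(n)=2,\quad f(i)=1\ (2\le i\le n-1),
\]
together with, for each $m$ with $1\le m\le n$ (indices modulo $n$), a prey point $z_m$ obtained from the coordinatewise meet of $v_m$ and $v_{m+1}$ by subtracting $\varepsilon(1,1,1)$ for a small $\varepsilon>0$. The first two coordinates $i$ and $-i$ make $v_1,\dots,v_n$ an antichain, so no vertex point lies below another and the only possible common lower bounds are prey points; moreover, since no integer lies strictly between consecutive integers, the meet of an adjacent pair $\{v_m,v_{m+1}\}$ with $m\le n-1$ is dominated by no other vertex point already in the first two coordinates. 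Thus each such $z_m$ sits below exactly $v_m$ and $v_{m+1}$, creating precisely the edge $v_mv_{m+1}$, and being minimal it is isolated in $C(D_S)$.

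The one edge that the first two coordinates cannot handle is the \emph{wrap-around} edge $v_nv_1$: the meet of $v_n$ and $v_1$ is dominated in the first two coordinates by every vertex point, so without the third coordinate the prey $z_n$ would create spurious edges to all of $v_2,\dots,v_{n-1}$. This is exactly the obstacle that $f$ removes: choosing $f(1)=f(n)=2$ strictly larger than $f(2)=\cdots=f(n-1)=1$ makes the third coordinate of $z_n$ exceed $f(l)$ for every interior $l$, so $z_n$ fails to lie below $v_2,\dots,v_{n-1}$ while still lying below $v_1$ and $v_n$. The main verification, and the step I expect to be most delicate, is checking that this third coordinate does not disturb the interior prey points and that no prey point lies below another, so that every $z_m$ remains minimal; this reduces to a finite check of the three coordinate inequalities at the boundary indices, handled uniformly by taking $\varepsilon$ small. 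Once these checks are in place, $C(D_S)=C_n\cup I_n$, giving $\dim_{\mathrm{poc}}(G)\le 3$ and hence, with the lower bound, $\dim_{\mathrm{poc}}(G)=3$.
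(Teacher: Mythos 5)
Your proof is correct, but note that the paper itself gives no proof of Proposition~\ref{prop:cycle}: it is quoted from Choi \emph{et al.}~\cite{pocdim} as background, so there is no in-paper argument to compare against line by line. Your lower bound is exactly what any proof must do --- rule out dimensions $0$, $1$, $2$ via Propositions~\ref{prop:dim0}--\ref{prop:dim2}, using that $C_n$ ($n\ge 4$) is not an interval graph --- and it is airtight. For the upper bound you work directly with coordinates in $\mathbb{R}^3$ rather than passing through the homothetic-triangle machinery (Theorem~\ref{thm:closed}/Corollary~\ref{cor:closed}) that this paper uses for its own upper bounds; that is a genuinely more elementary route, and I checked the details: the points $v_i=(i,-i,f(i))$ form an antichain; for $m\le n-1$ the prey $z_m=(m-\varepsilon,-m-1-\varepsilon,\min(f(m),f(m+1))-\varepsilon)$ lies below $v_l$ iff $l\in\{m,m+1\}$ (the first two coordinates already force this, and the third never obstructs it); the wrap-around prey $z_n=(1-\varepsilon,-n-\varepsilon,2-\varepsilon)$ lies below $v_l$ in the first two coordinates for every $l$, but the third coordinate $2-\varepsilon$ excludes all $l$ with $f(l)=1$, leaving exactly $v_1$ and $v_n$; and no point of $S$ lies strictly below any $z_m$ (vertex points fail the first two coordinates, and prey points fail either the first two coordinates or, in the case of a comparison with $z_n$, the third), so every prey point is minimal and hence isolated. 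Thus $C(D_S)=C_n\cup I_n$ and $\dim_{\mathrm{poc}}(C_n)=3$. The only stylistic gap is that you defer the ``finite check'' to a sketch; since the whole content of the upper bound lives in those inequalities, in a final write-up you should display them explicitly as above rather than asserting they are handled by taking $\varepsilon$ small.
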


\begin{Thm}\label{thm:tree}
Let $T$ be a tree.
Then $\dim_{\text{{\rm poc}}}(T) \leq 3$,
and the equality holds if and only if $T$
is not a caterpillar.
\end{Thm}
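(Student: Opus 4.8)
The plan is to separate the statement into the universal upper bound $\dim_{\mathrm{poc}}(T)\le 3$ and the characterization of when equality fails, the latter resting on the classical fact that a tree is an interval graph precisely when it is a caterpillar.

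First I would prove $\dim_{\mathrm{poc}}(T)\le 3$ by exhibiting a finite set $S\subseteq\mathbb{R}^3$ with $C(D_S)=T\cup I_k$. The realization scheme I have in mind assigns to each vertex $v$ of $T$ a point $\phi(v)$ and to each edge $e=uv$ a dedicated point $\psi(e)$; the points $\psi(e)$ will play the role of the $k$ isolated vertices. Placing all the $\phi(v)$ on an antichain (so that no tree-vertex lies below another and hence no tree-vertex can ever be a common prey) and all the $\psi(e)$ as minimal elements of $S$ (so that nothing lies below any $\psi(e)$, making each $\psi(e)$ isolated in $C(D_S)$) reduces the whole problem to a single requirement: for every edge $e=uv$ we need $\psi(e)\prec\phi(u)$ and $\psi(e)\prec\phi(v)$, whereas for every vertex $w\notin\{u,v\}$ we need $\psi(e)\not\prec\phi(w)$. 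Under these conditions two vertices compete in $D_S$ exactly when they are the endpoints of a common edge, so $C(D_S)=T\cup I_k$ as desired.

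The heart of the argument is therefore the geometric claim that such $\phi$ and $\psi$ always exist in $\mathbb{R}^3$, and this is the step I expect to be the main obstacle, since in $\mathbb{R}^2$ this very constraint is what forces $T$ to be an interval graph. I would carry it out by induction on the number of vertices: remove a leaf $\ell$ adjacent to a vertex $v$, realize the smaller tree $T-\ell$ by the inductive hypothesis, and then use the extra third coordinate to introduce $\phi(\ell)$ and the prey point $\psi(v\ell)$ in a region lying below $\phi(v)$ and $\phi(\ell)$ but below no other $\phi(w)$, placed far enough out that none of the previously chosen prey points drops below $\phi(\ell)$. The bookkeeping required to prevent the new point from creating any spurious domination—precisely the difficulty that high-degree branching vertices cause, and that makes $\mathbb{R}^2$ insufficient—is the technical crux.

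For the characterization I would invoke the well-known fact that a tree is an interval graph if and only if it is a caterpillar. If $T$ is a caterpillar, then $T$ is an interval graph, so by Propositions~\ref{prop:dim1} and~\ref{prop:dim2} we get $\dim_{\mathrm{poc}}(T)\le 2$ (the only complete trees being $K_1$ and $K_2$, while $K_t\cup K_1$ is disconnected and hence never a tree), so equality fails. Conversely, if $T$ is not a caterpillar, then $T$ is not an interval graph, so Proposition~\ref{prop:dim2} excludes $\dim_{\mathrm{poc}}(T)=2$; and since a tree other than $K_1$ or $K_2$ is neither complete nor a complete graph together with an isolated vertex, Propositions~\ref{prop:dim0} and~\ref{prop:dim1} exclude the values $0$ and $1$. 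Hence $\dim_{\mathrm{poc}}(T)\ge 3$, which together with the upper bound yields $\dim_{\mathrm{poc}}(T)=3$ and completes the characterization.
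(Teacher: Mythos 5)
First, a point of comparison: the paper does not prove Theorem~\ref{thm:tree} at all --- it is imported from \cite{pocdim} and stated without proof --- so there is no in-paper argument to measure yours against. Your treatment of the characterization is correct and complete given the upper bound: a tree is an interval graph iff it is a caterpillar, the only complete trees are $K_1$ and $K_2$, a tree is never $K_t\cup K_1$, and Propositions~\ref{prop:dim0}--\ref{prop:dim2} then pin the dimension to $\le 2$ for caterpillars and to exactly $3$ for non-caterpillars.

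The gap is in the upper bound $\dim_{\mathrm{poc}}(T)\le 3$, which is the actual content of the theorem. Your global scheme (tree vertices on an antichain, one minimal prey point per edge lying below exactly its two endpoints) is sound and correctly reduces the theorem to the existence of $\phi$ and $\psi$. But the inductive step that is supposed to produce them is only asserted: you name the leaf-attachment step ``the technical crux'' and then do not carry it out. As stated, the induction hypothesis ``realize $T-\ell$'' is too weak --- after realizing $T-\ell$ nothing guarantees that the down-set of $\phi(v)$ still has room for a new prey point avoiding the down-sets of all other $\phi(w)$, nor for a new point $\phi(\ell)$ whose down-set avoids all previously placed prey points; ``use the extra third coordinate'' and ``placed far enough out'' do not identify where this room comes from. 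Note also that the third coordinate buys you nothing beyond a planar picture: with all $\phi(v)$ on the hyperplane $x_1+x_2+x_3=1$, the trace of each down-set on $\mathcal{H}$ is a homothetic open equilateral triangle $A(v)$, and your requirement is precisely a triangle-placement problem in the plane --- which is why the statement is not automatic in the way your sketch suggests. To close the gap you must strengthen the induction hypothesis (e.g.\ maintain that every triangle retains a boundary region disjoint from all other triangles, reserved for future leaves), or, more cleanly and using machinery already in this paper, arrange that non-adjacent vertices receive disjoint triangles: since a tree contains no triangle of mutually adjacent vertices, every third triangle is then automatically disjoint from $A(u)\cap A(v)$ for each edge $uv$, so it suffices to prove that every tree is the intersection graph of a family of homothetic closed equilateral triangles and invoke Corollary~\ref{cor:closed}. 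That intersection representation is itself a routine but necessary induction that still has to be written out.
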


In this paper, we study the partial order competition dimensions of chordal graphs.
We thought that most likely candidates for the family of graphs having partial order competition dimension
at most three are chordal graphs since both trees and interval graphs, which are chordal graphs,
have partial order competition dimensions at most three.
In fact,
we show that chordal graphs have partial order competition dimensions at most three
if the graphs are diamond-free.
However, contrary to our presumption,
we could show the existence of
chordal graphs with partial order competition dimensions greater than three.


\section{Preliminaries}

We say that two sets in $\mathbb{R}^d$ are \emph{homothetic}
if they are related by a geometric contraction or expansion.
Choi {\it et al.}~\cite{pocdim} gave a characterization of
the competition graphs of $d$-partial orders.
We state it in the case where $d=3$.

\begin{Thm} [\hskip-0.0025em \cite{pocdim}]\label{thm:intersectiongeneral}
A graph $G$ is the competition graph of a $3$-partial order
if and only if
there exists a family $\mathcal{F}$ of homothetic open equilateral   triangles
contained in the plane $\{x=(x_1,x_2,x_3) \in \mathbb{R}^3 \mid x_1+x_2+x_3 = 0 \}$ and
there exists a one-to-one correspondence $A: V(G) \to \mathcal{F}$
such that
\begin{itemize}
\item[{\rm ($\star$)}]
two vertices $v$ and $w$ are adjacent in $G$ if and only if
two elements $A(v)$ and $A(w)$ have the intersection containing the closure $\triangle(x)$
of an element $A(x)$ in $\mathcal{F}$.
\end{itemize}
\end{Thm}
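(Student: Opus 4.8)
The plan is to reduce the combinatorial definition of $C(D_S)$ to a statement about planar triangles by slicing the relevant down-sets with the plane $P := \{x \in \mathbb{R}^3 \mid x_1+x_2+x_3 = 0\}$. For $p \in \mathbb{R}^3$ let $R(p) := \{y \in \mathbb{R}^3 \mid y \prec p\}$ be its open down-set, a translate of the open negative octant, and set $\tau(p) := R(p) \cap P = \{y \in P \mid y_i < p_i,\ i=1,2,3\}$. The three constraints $y_i < p_i$ cut $P$ into three half-planes whose inner normals point in directions $120^\circ$ apart, so whenever $p_1+p_2+p_3 > 0$ the slice $\tau(p)$ is a nonempty bounded open equilateral triangle, and all of these share a common orientation, namely the one induced by down-sets; when $p_1+p_2+p_3 \le 0$ the slice is empty. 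Conversely, reading the three side-lines off such a triangle recovers $p_1,p_2,p_3$, so $\tau$ is a bijection between points $p$ with $p_1+p_2+p_3>0$ and the down-oriented homothetic open equilateral triangles in $P$, the coordinate sum controlling the size.

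The technical heart is a dictionary between domination and triangle containment, which I would isolate as a lemma: for $z,p$ with positive coordinate sums,
\[
z \prec p \quad\Longleftrightarrow\quad \overline{\tau(z)} \subseteq \tau(p).
\]
The forward implication is immediate since $y_i \le z_i < p_i$ forces $y \in \tau(p)$ for every $y \in \overline{\tau(z)}$. For the converse I would examine the three corners of the closed triangle $\overline{\tau(z)}$, such as $(z_1,z_2,-z_1-z_2)$: since each corner lies in the open triangle $\tau(p)$, the corner where the constraints $y_1=z_1,\,y_2=z_2$ are tight forces $z_1<p_1$ and $z_2<p_2$, and the remaining two corners supply $z_3<p_3$. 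The point to stress is that the strictness built into $\prec$ matches \emph{exactly} the pairing of the \emph{closed} triangle $\overline{\tau(z)}$ with the \emph{open} triangle $\tau(p)$ demanded by $(\star)$, so no genericity hypothesis beyond nondegeneracy of the triangles is required.

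With the lemma in hand both directions are short. Suppose first $G = C(D_S)$. Translating $S$ by $(t,t,t)$ changes neither $D_S$ nor $C(D_S)$ yet raises every coordinate sum, so for large $t$ the map $A := \tau|_S$ sends $V(G)=S$ injectively to a family $\mathcal{F}$ of down-oriented homothetic open equilateral triangles in $P$. Since $v$ and $w$ compete precisely when some $z \in S$ lies in $R(v)\cap R(w)$, the lemma rewrites this as: some $z \in S$ satisfies $\overline{\tau(z)} \subseteq \tau(v)\cap\tau(w)$, i.e.\ $A(v)\cap A(w)$ contains the closure of some $A(z)$, which is exactly $(\star)$. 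Conversely, given $\mathcal{F}$ and a bijection $A$ satisfying $(\star)$, I would apply $\tau^{-1}$ to each triangle to obtain a finite $S \subseteq \mathbb{R}^3$ with $\tau|_S = A$, and then run the same computation backwards to conclude $v$ and $w$ are adjacent in $G$ if and only if they compete in $D_S$, so $G = C(D_S)$.

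The main obstacle is not these two equivalences but the geometric bookkeeping behind the map $\tau$: verifying that down-set slices are genuinely equilateral, and that homothety pins down a single orientation so that $\tau$ and $\tau^{-1}$ are well defined. This is where the choice of the plane $x_1+x_2+x_3=0$ is essential, since only there do the three coordinate directions project symmetrically. One must also guarantee that the family is oriented as slices of \emph{down}-sets rather than up-sets; I would handle this by noting that an orientation-reversing replacement would turn down-sets into up-sets and the competition graph into the common-enemy graph, so the convention must be fixed, and that any up-oriented family can be brought to the down-oriented one by the $180^\circ$ rotation of $P$, which is an isometry and therefore preserves the containments and intersections appearing in $(\star)$, leaving the represented graph unchanged. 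Once $\tau$ is set up this way, the degenerate cases (empty slices when $p_1+p_2+p_3 \le 0$) are eliminated by the translation step and cause no further difficulty.
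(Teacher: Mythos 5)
Your argument is correct, and it is essentially the intended one: the paper states this theorem as a citation from \cite{pocdim} without reproducing a proof, but immediately afterwards it describes the correspondence $A$ explicitly via the corners $p_1^{(v)}=(-v_2-v_3,v_2,v_3)$, $p_2^{(v)}=(v_1,-v_1-v_3,v_3)$, $p_3^{(v)}=(v_1,v_2,-v_1-v_2)$, which is exactly your slicing map $\tau$, i.e.\ $A(v)=\{y\in\mathcal{H}\mid y_i<v_i\}$ and $\triangle(v)=\{y\in\mathcal{H}\mid y_i\le v_i\}$. Your key dictionary $z\prec p\Leftrightarrow\overline{\tau(z)}\subseteq\tau(p)$, together with the translation to force positive coordinate sums and the rotation normalizing the orientation of the family, matches the construction the paper relies on.
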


\noindent
Choi {\it et al.}~\cite{pocdim} also gave a sufficient condition for
a graph being the competition graph of a $d$-partial order.
We state their result in the case where $d=3$.

\begin{Thm}[\hskip-0.0025em \cite{pocdim}]\label{thm:closed}
If $G$ is the intersection graph of
a finite family of homothetic closed equilateral triangles,
then $G$ together with sufficiently many new isolated vertices
is the competition graph of a $3$-partial order.
\end{Thm}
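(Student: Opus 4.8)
The plan is to deduce the statement from the characterization in Theorem~\ref{thm:intersectiongeneral}. By hypothesis we are given a finite family $\{T_v \mid v \in V(G)\}$ of homothetic closed equilateral triangles, all of the same orientation, lying in a plane that we identify with $\{x \in \mathbb{R}^3 \mid x_1+x_2+x_3=0\}$, such that $T_v \cap T_w \neq \emptyset$ exactly when $vw \in E(G)$. From these I would build an open-triangle family $\mathcal{F}$ realizing $G$ together with one extra isolated vertex per edge, and then verify condition~($\star$). The witness triangle $\triangle(x)$ demanded by ($\star$) for each edge will be supplied by a tiny triangle assigned to a new isolated vertex placed inside the overlap of the two incident triangles; this is precisely where the ``sufficiently many isolated vertices'' are spent.

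The main obstacle, and the first step I would carry out, is that two closed triangles $T_v, T_w$ may meet only along their boundaries (a shared corner, or overlapping edges), in which case their interiors are disjoint and no witness triangle can be fitted into the overlap. I would repair this by replacing each $T_v$ with its image $T_v'$ under the homothety $h_v$ centered at the centroid $c_v$ of $T_v$ with ratio $1+\varepsilon$. For any common point $p \in T_v \cap T_w$, the preimage $h_v^{-1}(p)=c_v+\frac{1}{1+\varepsilon}(p-c_v)$ lies on the open segment from $c_v\in\operatorname{int}(T_v)$ to $p\in T_v$, hence in $\operatorname{int}(T_v)$, so $p\in\operatorname{int}(T_v')$; likewise $p\in\operatorname{int}(T_w')$, and therefore $\operatorname{int}(T_v') \cap \operatorname{int}(T_w')$ is a nonempty open set whenever $vw \in E(G)$. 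On the other hand, disjoint pairs $T_v, T_w$ have positive distance, so for $\varepsilon$ small enough $T_v' \cap T_w'$ remains empty; since there are finitely many pairs, a single $\varepsilon>0$ works for all of them. The enlarged triangles are still homothetic equilateral triangles of the common orientation, and their intersection graph is again $G$.

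Now set $A(v) := \operatorname{int}(T_v')$ for each $v \in V(G)$. For each edge $e = vw$ I would introduce a new vertex $x_e$ and assign it an open equilateral triangle $A(x_e)$ of the common orientation, all of a single sufficiently small size $\sigma$, whose closure $\triangle(x_e)$ is contained in the nonempty open region $A(v) \cap A(w)$; this is possible because that region is open and two-dimensional, and $\sigma$ can be chosen uniformly over the finitely many edges. Let $\mathcal{F}$ be the resulting family, indexed by $V(G) \cup \{x_e \mid e \in E(G)\}$. It then remains to check ($\star$). If $vw \in E(G)$ then $\triangle(x_{vw}) \subseteq A(v) \cap A(w)$ by construction, so $v,w$ are adjacent. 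If $vw \notin E(G)$ then $A(v) \cap A(w) = \emptyset$, so no member's closure lies in it. Finally each $x_e$ is isolated: for any $u$ the set $A(u) \cap A(x_e)$ is contained in the size-$\sigma$ open triangle $A(x_e)$, and since the witness triangles are the smallest members of $\mathcal{F}$ and all have the same size $\sigma$, no closure $\triangle(x)$ of a member can lie inside an open triangle of equal size $\sigma$ (a closed triangle and an open triangle of the same size and orientation cannot satisfy a strict containment, as an equal-area comparison shows); hence no witness is contained in $A(u)\cap A(x_e)$.

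Therefore $\mathcal{F}$ satisfies ($\star$) for the graph $G \cup I_{|E(G)|}$, and Theorem~\ref{thm:intersectiongeneral} shows that $G$ together with $|E(G)|$ isolated vertices is the competition graph of a $3$-partial order, which is exactly the assertion. I expect the enlargement argument of the second paragraph to be the only genuinely delicate point; the remaining verifications are direct consequences of the construction.
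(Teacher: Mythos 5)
This statement is quoted in the paper from \cite{pocdim} and is not proved there, so there is no in-paper argument to compare against; judged on its own, your derivation from Theorem~\ref{thm:intersectiongeneral} is correct and complete. The two points that actually need care are exactly the ones you isolate and handle properly: (i) the $(1+\varepsilon)$-homothety centered at the centroid turns boundary-only contacts into interior overlaps (the open segment from an interior point to a boundary point of a convex body stays in the interior) while a single small $\varepsilon$ preserves all finitely many disjointnesses; and (ii) the witness triangles cannot create spurious adjacencies, since a closed triangle homothetic to and of the same side length as an open triangle cannot be contained in it, and everything else in the family is strictly larger. Two cosmetic remarks: when you ``identify'' the ambient plane with $\mathcal{H}$ you are implicitly applying a similarity carrying the common orientation of the given triangles to the orientation of the triangles $\triangle(v)$, $v\in\mathcal{H}_+$, which should be said explicitly since Theorem~\ref{thm:intersectiongeneral} requires that specific form; and if two vertices of $G$ happen to receive identical triangles, a further infinitesimal perturbation is needed to make the correspondence $A$ one-to-one. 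Neither affects the substance of the argument.
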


\noindent
By the definition of the partial order competition dimension of a graph,
we have the following:

\begin{Cor}\label{cor:closed}
If $G$ is the intersection graph of
a finite family of homothetic closed equilateral triangles,
then $\dim_{\text{{\rm poc}}}(G) \leq 3$.
\end{Cor}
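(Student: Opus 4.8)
The plan is to read the corollary off directly from Theorem~\ref{thm:closed} together with the definition of the partial order competition dimension; no new geometric construction is required. First I would invoke Theorem~\ref{thm:closed}: since $G$ is the intersection graph of a finite family of homothetic closed equilateral triangles, there exists a nonnegative integer $k$ such that $G \cup I_k$ is the competition graph of some $3$-partial order $D$. By the definition of a $d$-partial order, $D$ is isomorphic to $D_S$ for some finite subset $S \subseteq \mathbb{R}^3$, so we obtain $G \cup I_k = C(D_S)$ for this choice of $k$ and $S$.

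Next I would compare this witness against the defining formula
\[
\dim_{\text{{\rm poc}}}(G) = \min \{d \in \mathbb{Z}_{\geq 0} \mid \exists k \in \mathbb{Z}_{\geq 0},\ \exists S \subseteq \mathbb{R}^d \text{ s.t. } G \cup I_k = C(D_S)\}.
\]
The construction above shows that $d = 3$ belongs to the set on the right-hand side. Since $\dim_{\text{{\rm poc}}}(G)$ is by definition the minimum of that set, it follows immediately that $\dim_{\text{{\rm poc}}}(G) \leq 3$, which is exactly the assertion.

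There is essentially no obstacle at this level, since all of the substantive work has already been carried out in Theorem~\ref{thm:closed}, whose proof supplies the translation from a family of closed equilateral triangles to a $3$-partial order. The only matters that warrant a moment's checking are bookkeeping ones: that the phrase ``sufficiently many new isolated vertices'' in Theorem~\ref{thm:closed} corresponds precisely to the set $I_k$ permitted in the definition, and that the ambient dimension of the partial order is $3$, matching the value $d = 3$ at which we evaluate the minimum. Both hold by inspection, so the corollary is immediate.
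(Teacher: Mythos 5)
Your proposal is correct and matches the paper exactly: the paper derives the corollary from Theorem~\ref{thm:closed} ``by the definition of the partial order competition dimension,'' which is precisely your argument of exhibiting $d=3$ as a member of the set being minimized. Nothing further is needed.
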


\noindent
Note that the converse of Corollary \ref{cor:closed} is not true
by an example given by Choi {\em et al.}~\cite{pocdim}
(see Figure~\ref{counterexample}).
In this context, one can guess that it is not so easy to show that
a graph has partial order competition dimension greater than three.

\begin{figure}
\psfrag{a}{$v_1$}
\psfrag{b}{$v_2$}
\psfrag{c}{$v_3$}
\psfrag{d}{$v_4$}
\psfrag{e}{$v_5$}
\psfrag{f}{$v_6$}
\psfrag{g}{$v_7$}
\psfrag{A}{$A^2(\mathbf{v}_1)$}
\psfrag{B}{$A^2(\mathbf{v}_2)$}
\psfrag{C}{$A^2(\mathbf{v}_3)$}
\psfrag{D}{$A^2(\mathbf{v}_4)$}
\psfrag{E}{$A^2(\mathbf{v}_5)$}
\psfrag{F}{$A^2(\mathbf{v}_6)$}
\psfrag{G}{$A^2(\mathbf{v}_7)$}
\psfrag{h}{$G$}
\begin{center}
\includegraphics[height=5cm]{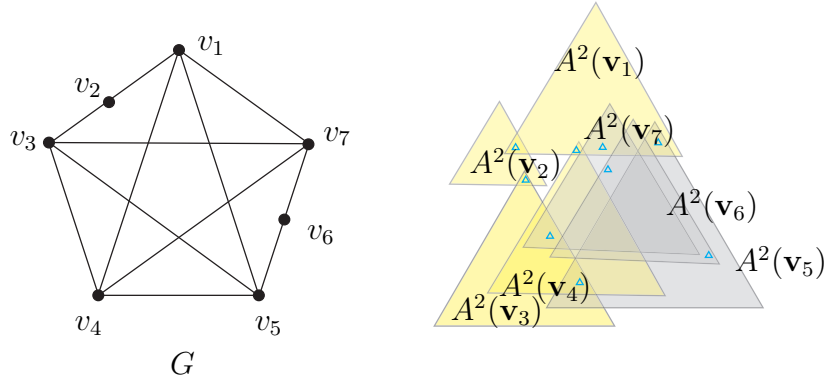}
\end{center}
\caption{A subdivision $G$ of $K_5$ and a family of homothetic equilateral triangles making
$G$ together with $9$ isolated vertices into
the competition graph of a $3$-partial order, which is given in \cite{pocdim}}
\label{counterexample}
\end{figure}

The correspondence $A$ in Theorem~\ref{thm:intersectiongeneral} can be precisely described as follows:
Let $\mathcal{H}:=\{x = (x_1, x_2, x_3) \in \mathbb{R}^3
\mid x_1 + x_2 + x_3 = 0 \}$
and $\mathcal{H}_+ := \{x = (x_1, x_2, x_3) \in \mathbb{R}^3 \mid x_1 + x_2 + x_3 > 0 \}$.
For a point $v=(v_1, v_2, v_3) \in \mathcal{H}_+$,
let $p_{1}^{(v)}$, $p_{2}^{(v)}$, and $p_{3}^{(v)}$
be points in $\mathbb{R}^3$ defined by
$p_{1}^{(v)} := (-v_2-v_3, v_2, v_3)$,
$p_{2}^{(v)} := (v_1, -v_1-v_3, v_3)$, and
$p_{3}^{(v)} := (v_1, v_2, -v_1-v_2)$,
and let $\triangle(v)$ be the convex hull of the points
$p_{1}^{(v)}$, $p_{2}^{(v)}$, and $p_{3}^{(v)}$, i.e.,
$
\triangle(v) := \text{{\rm Conv}}(p_{1}^{(v)},p_{2}^{(v)},p_{3}^{(v)})
= \left\{ \sum_{i=1}^3 \lambda_i p_{i}^{(v)}
\mid \sum_{i=1}^3 \lambda_i=1, \lambda_i \geq 0 \ (i=1,2,3) \right\}.
$
Then it is easy to check that $\triangle(v)$ is an closed equilateral triangle
which is contained in the plane $\mathcal{H}$.
Let $A(v)$ be the relative interior of the closed triangle $\triangle(v)$, i.e.,
$
A(v) := \text{rel.int}(\triangle(v))
= \left\{ \sum_{i=1}^3 \lambda_i p_{i}^{(v)}
\mid \sum_{i=1}^3 \lambda_i=1, \lambda_i > 0 \ (i=1,2,3) \right\}.
$
Then
$A(v)$ and $A(w)$ are homothetic for any $v,w \in \mathcal{H}_+$.

For $v \in \mathcal{H}_+$ and $(i,j) \in \{(1,2),(2,3),(1,3)\}$,
let $l_{ij}^{(v)}$ denote the line through
the two points $p^{(v)}_{i}$ and $p^{(v)}_{j}$, i.e.,
$
l_{ij}^{(v)} := \{ x \in \mathbb{R}^3 \mid
x = \alpha p^{(v)}_{i} + (1 - \alpha) p^{(v)}_{j}, \alpha \in \mathbb{R} \},
$
and let $R_{ij}(v)$ denote the following region:
\[
R_{ij}(v) := \{ x \in \mathbb{R}^3 \mid
x = (1-\alpha - \beta)p^{(v)}_{k} + \alpha p^{(v)}_{i} + \beta p^{(v)}_{j} ,
0 \leq \alpha \in \mathbb{R}, 0 \leq \beta \in \mathbb{R}, \alpha + \beta \geq 1 \},
\]
where $k$ is the element in $\{1,2,3\} \setminus \{i,j\}$;
for $k \in \{1,2,3\}$, let $R_{k}(v)$ denote the following region:
\[
R_{k}(v) := \{ x \in \mathbb{R}^3 \mid
x = (1 + \alpha + \beta)p^{(v)}_{k} - \alpha p^{(v)}_{i} - \beta p^{(v)}_{j},
0 \leq \alpha \in \mathbb{R}, 0 \leq \beta \in \mathbb{R} \},
\]
where $i$ and $j$ are elements such that
$\{i,j,k\} = \{1,2,3\}$.
(See Figure~\ref{region} for an illustration.)

\begin{figure}
\psfrag{A}{ $p^{(v)}_{1}$}
\psfrag{B}{ $p^{(v)}_{2}$}
\psfrag{C}{ $p^{(v)}_{3}$}
\psfrag{D}{ $\triangle(v)$}
\psfrag{E}{ $R_{23}(v)$}
\psfrag{F}{ $R_3(v)$}
\psfrag{G}{ $R_{13}(v)$}
\psfrag{H}{ $R_1(v)$}
\psfrag{I}{ $R_{12}(v)$}
\psfrag{J}{ $R_2(v)$}
\psfrag{K}{ $l_{12}^{(v)}$}
\psfrag{L}{ $l_{13}^{(v)}$}
\psfrag{M}{ $l_{23}^{(v)}$}
\begin{center}
\includegraphics[height=5cm]{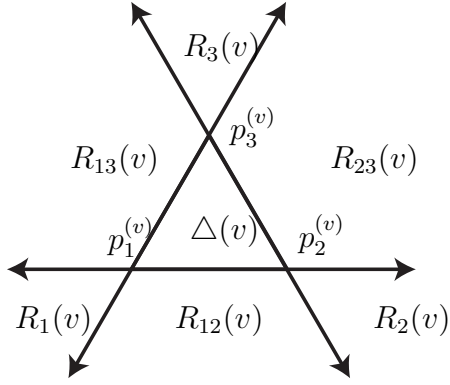}
\end{center}
\vskip-1em
\caption{The regions determined by $v$. By our assumption, for any vertex $u$ of a graph considered in this paper, $p_1^{(u)}$, $p_2^{(u)}$, $p_3^{(u)}$ correspond to $p_1^{(v)}$, $p_2^{(v)}$, $p_3^{(v)}$ respectively.
}
\label{region}
\end{figure}

If a graph $G$ satisfies $\dim_{\text{{\rm poc}}}(G) \leq 3$, then, by Theroem~\ref{thm:intersectiongeneral}, we may assume that $V(G) \subseteq \mathcal{H}_+$ by translating each of the vertices of $G$ in the same direction and by the same amount.

\begin{Lem}\label{lem:not-incl1}
Let $D$ be a $3$-partial order and
let $G$ be the competition graph of $D$.
Suppose that $G$ contains an induced path $uvw$ of length two.
Then neither $A(u) \cap A(v) \subseteq A(w)$ nor $A(v) \cap A(w) \subseteq A(u)$.
\end{Lem}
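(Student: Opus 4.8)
The plan is to reduce everything to the geometric characterization $(\star)$ supplied by Theorem~\ref{thm:intersectiongeneral}. Since $G = C(D)$ is the competition graph of the $3$-partial order $D$, that theorem yields a family $\mathcal{F}$ of homothetic open equilateral triangles and a bijection $A \colon V(G) \to \mathcal{F}$ for which two vertices are adjacent in $G$ precisely when the intersection of their triangles contains the closure $\triangle(x)$ of some element $A(x) \in \mathcal{F}$. First I would unpack the three hypotheses on the induced path $uvw$ through $(\star)$: from $uv \in E(G)$ I obtain a vertex $a \in V(G)$ with $\triangle(a) \subseteq A(u) \cap A(v)$; from $vw \in E(G)$ a vertex $b \in V(G)$ with $\triangle(b) \subseteq A(v) \cap A(w)$; and from $uw \notin E(G)$ the key negative fact that $\triangle(x) \not\subseteq A(u) \cap A(w)$ for every $x \in V(G)$.

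For the first non-containment I would argue by contradiction. Assume $A(u) \cap A(v) \subseteq A(w)$. Chaining this with $\triangle(a) \subseteq A(u) \cap A(v)$ gives $\triangle(a) \subseteq A(w)$; since also $\triangle(a) \subseteq A(u)$, I conclude $\triangle(a) \subseteq A(u) \cap A(w)$. As $a \in V(G)$, condition $(\star)$ then forces $u$ and $w$ to be adjacent in $G$, contradicting that $uvw$ is an induced path. Hence $A(u) \cap A(v) \not\subseteq A(w)$.

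The second non-containment follows by the same argument with the roles of $u$ and $w$ interchanged, using the prey triangle $\triangle(b) \subseteq A(v) \cap A(w)$ in place of $\triangle(a)$: assuming $A(v) \cap A(w) \subseteq A(u)$ yields $\triangle(b) \subseteq A(u)$, and combined with $\triangle(b) \subseteq A(w)$ this gives $\triangle(b) \subseteq A(u) \cap A(w)$, so $(\star)$ again produces the forbidden edge $uw$.

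I do not expect a serious obstacle here; the whole argument is essentially a bookkeeping exercise with $(\star)$, and the proof is short. The one point that needs care is the interplay between the closed prey triangles $\triangle(a), \triangle(b)$ and the open triangles $A(\cdot)$: I must make sure that the containment of a closed triangle inside the open intersection $A(u) \cap A(v)$ survives the chaining into $A(w)$, and that the vertices $a, b$ furnished by $(\star)$ genuinely belong to $V(G)$ (they may be among any isolated vertices present) so that $(\star)$ can be reapplied in the reverse direction to detect the illegal edge $uw$.
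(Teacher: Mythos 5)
Your proof is correct and follows essentially the same route as the paper's: obtain the prey triangle $\triangle(a) \subseteq A(u) \cap A(v)$ from condition~($\star$), chain the assumed containment to land $\triangle(a)$ inside $A(u) \cap A(w)$, and conclude the forbidden adjacency $uw$. The only cosmetic difference is that the paper dispatches the second non-containment by symmetry whereas you write it out explicitly with $\triangle(b)$.
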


\begin{proof}
We show by contradiction.
Suppose that $A(u) \cap A(v) \subseteq A(w)$ or $A(v) \cap A(w) \subseteq A(u)$.
By symmetry, we may assume without loss of generality that $A(u) \cap A(v) \subseteq A(w)$.
Since $u$ and $v$ are adjacent in $G$,
there exists a vertex $a \in V(G)$ such that
$\triangle(a) \subseteq A(u) \cap A(v)$
by Theorem~\ref{thm:intersectiongeneral}.
Therefore
$\triangle(a) \subseteq A(w)$.
Since $\triangle(a) \subseteq A(u)$,
$u$ and $w$ are adjacent in $G$ by Theorem~\ref{thm:intersectiongeneral},
which is a contradiction to the assumption that
$u$ and $w$ are not adjacent in $G$.
Hence the lemma holds.
\end{proof}

\begin{Defi}
For $v,w \in \mathcal{H}_+$,
we say that $v$ and $w$ are
\emph{crossing} if
$A(v) \cap A(w) \neq \emptyset$,
$A(v) \setminus A(w) \neq \emptyset$, and
$A(w) \setminus A(v) \neq \emptyset$.
\end{Defi}

\begin{Lem}\label{lem:not-incl2}
Let $D$ be a $3$-partial order and
let $G$ be the competition graph of $D$.
Suppose that $G$ contains an induced path $xuvw$ of length three.
Then $u$ and $v$ are crossing.
\end{Lem}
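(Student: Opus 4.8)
The plan is to verify the three defining conditions of crossing for $A(u)$ and $A(v)$ directly. The first condition, $A(u) \cap A(v) \neq \emptyset$, is immediate: since $u$ and $v$ are adjacent in $G$, Theorem~\ref{thm:intersectiongeneral} guarantees a vertex $a \in V(G)$ with $\triangle(a) \subseteq A(u) \cap A(v)$, so in particular the intersection is nonempty. It then remains to show $A(u) \setminus A(v) \neq \emptyset$ and $A(v) \setminus A(u) \neq \emptyset$, i.e., that neither of the two sets contains the other.

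The key observation is that the path $xuvw$ contains two overlapping induced subpaths of length two, namely $xuv$ (with middle vertex $u$) and $uvw$ (with middle vertex $v$); both are induced because $x$ is not adjacent to $v$, $u$ is not adjacent to $w$, and $x$ is not adjacent to $w$ in $G$. I would then argue by contradiction. First, suppose $A(u) \setminus A(v) = \emptyset$, so that $A(u) \subseteq A(v)$. Then $A(x) \cap A(u) \subseteq A(u) \subseteq A(v)$, which contradicts Lemma~\ref{lem:not-incl1} applied to the induced path $xuv$, since that lemma forbids $A(x) \cap A(u) \subseteq A(v)$. Symmetrically, suppose $A(v) \setminus A(u) = \emptyset$, so that $A(v) \subseteq A(u)$. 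Then $A(v) \cap A(w) \subseteq A(v) \subseteq A(u)$, contradicting Lemma~\ref{lem:not-incl1} applied to the induced path $uvw$, which forbids $A(v) \cap A(w) \subseteq A(u)$. Hence both differences are nonempty, and $u$ and $v$ are crossing.

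There is no real obstacle here; the entire content is recognizing that the two endpoints $x$ and $w$ of the length-three path are exactly what is needed to block the two possible inclusions between $A(u)$ and $A(v)$. Each forbidden inclusion of Lemma~\ref{lem:not-incl1} rules out one direction of containment, and the two length-two subpaths supply the two instances of the lemma. The only point requiring a moment's care is checking that $xuv$ and $uvw$ are genuinely induced, which follows from the non-adjacencies inherent in the induced path $xuvw$.
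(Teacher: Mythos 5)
Your proposal is correct and follows essentially the same argument as the paper: nonemptiness of $A(u)\cap A(v)$ via Theorem~\ref{thm:intersectiongeneral}, and the two inclusions ruled out by applying Lemma~\ref{lem:not-incl1} to the induced subpaths $xuv$ and $uvw$. The extra remark verifying that these subpaths are induced is a nice touch but does not change the substance.
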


\begin{proof}
Since $u$ and $v$ are adjacent in $G$, there exists a vertex $a \in V(G)$
such that $\triangle(a) \subseteq A(u) \cap A(v)$
by Theorem~\ref{thm:intersectiongeneral}.
Therefore $A(u) \cap A(v) \neq \emptyset$.
If $A(v) \subseteq A(u)$, then $A(v) \cap A(w) \subseteq A(u)$,
which contradicts Lemma~\ref{lem:not-incl1}.
Thus $A(v) \setminus A(u) \neq \emptyset$.
If $A(u) \subseteq A(v)$, then $A(x) \cap A(u) \subseteq A(v)$,
which contradicts Lemma~\ref{lem:not-incl1}.
Thus $A(u) \setminus A(v) \neq \emptyset$.
Hence $u$ and $v$ are crossing.
\end{proof}

\begin{Lem}\label{lem:intersecting3}
If $v$ and $w$ in $\mathcal{H}_+$ are crossing,
then $p_k^{(x)} \in \triangle(y)$ for some $k \in \{1,2,3\}$
where $\{x,y\} = \{v,w\}$.
\end{Lem}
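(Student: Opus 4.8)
The plan is to replace the geometric ``crossing'' hypothesis by a purely coordinatewise condition on $v$ and $w$, exploiting the explicit description of the vertices $p_1^{(v)},p_2^{(v)},p_3^{(v)}$. First I would record the following clean description of the triangles: for every $v \in \mathcal{H}_+$,
\[
\triangle(v) = \{ x \in \mathcal{H} \mid x_i \leq v_i \text{ for } i=1,2,3 \}, \qquad A(v) = \{ x \in \mathcal{H} \mid x_i < v_i \text{ for } i=1,2,3 \}.
\]
This is read off directly from the given coordinates: the edge $p_i^{(v)}p_j^{(v)}$ lies in the plane $\{x_k = v_k\}$ while the opposite vertex $p_k^{(v)}$ has $k$-th coordinate $-\sum_{i \neq k} v_i < v_k$ (here $\sum_i v_i > 0$ because $v \in \mathcal{H}_+$). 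Thus each inequality $x_i \leq v_i$ is one supporting half-plane of $\triangle(v)$; checking that the three bounding lines meet pairwise exactly in $p_1^{(v)},p_2^{(v)},p_3^{(v)}$ and that the centroid satisfies all three strictly shows that the intersection of the three half-planes is precisely the bounded triangle $\triangle(v)$, and passing to relative interiors gives the description of $A(v)$.

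With this description the hypotheses become elementary. Since $A(v) \cap A(w) = \{ x \in \mathcal{H} \mid x_i < \min(v_i,w_i) \text{ for each } i \}$, the condition $A(v) \cap A(w) \neq \emptyset$ is equivalent to $\min(v_1,w_1) + \min(v_2,w_2) + \min(v_3,w_3) > 0$. Moreover $A(v) \subseteq A(w)$ holds if and only if $v_i \leq w_i$ for all $i$ (the forward direction follows by pushing a point of $A(v)$ toward the edge on $\{x_j = v_j\}$ whenever $v_j > w_j$), so $A(v) \setminus A(w) \neq \emptyset$ means $v_j > w_j$ for some $j$, and symmetrically for $A(w) \setminus A(v) \neq \emptyset$. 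Hence ``$v$ and $w$ are crossing'' translates into: (i)~$\sum_i \min(v_i,w_i) > 0$; and (ii)~$v$ and $w$ are incomparable in the coordinatewise order, i.e.\ $v_j > w_j$ for some $j$ and $w_{j'} > v_{j'}$ for some $j'$.

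Finally I would run a short pigeonhole argument on the three coordinates. Writing $d_i = v_i - w_i$, incomparability means the sets $\{i \mid d_i > 0\}$ and $\{i \mid d_i < 0\}$ are both nonempty; being disjoint subsets of a $3$-element set, at least one of them is a singleton. By symmetry in $\{v,w\}$ I may assume $\{i \mid d_i > 0\} = \{k\}$, so that $v_k > w_k$ while $v_i \leq w_i$ for the other two indices $i$. Then the vertex $p_k^{(v)}$, whose $i$-th coordinate equals $v_i$ for $i \neq k$ and $-\sum_{i \neq k} v_i$ for $i = k$, satisfies $x_i = v_i \leq w_i$ for $i \neq k$, and its $k$-th coordinate obeys $-\sum_{i \neq k} v_i \leq w_k$ precisely because condition (i) reads $w_k + \sum_{i \neq k} v_i > 0$ (here $\min(v_k,w_k) = w_k$ and $\min(v_i,w_i) = v_i$ for $i \neq k$). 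Therefore $p_k^{(v)} \in \triangle(w)$, which is the desired conclusion with $x = v$ and $y = w$.

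The routine geometry---verifying the half-plane description and the two equivalences---is where the real content lies; once the problem is phrased coordinatewise, the pigeonhole step is immediate. The one point deserving care is the bookkeeping: the single inequality that is \emph{not} forced by incomparability is supplied by the nonempty-intersection hypothesis (i), and one must confirm that these are literally the same inequality, which is exactly what the identity $\min(v_k,w_k) = w_k$ guarantees. I expect the main obstacle to be stating the coordinate description of $\triangle(v)$ cleanly enough that the remainder follows with no further case analysis.
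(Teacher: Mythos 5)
Your proof is correct. It is worth noting that the paper's own proof of this lemma is essentially a one-line assertion: from the three crossing conditions it simply declares that ``one of the vertices of the triangles $\triangle(v)$ and $\triangle(w)$ is contained in the other triangle,'' with no supporting argument. You supply the missing argument, and your route through coordinates is clean: the half-plane description $\triangle(v) = \{x \in \mathcal{H} \mid x_i \leq v_i,\ i=1,2,3\}$ is easily verified (the $i$-th coordinate of $\sum_j \lambda_j p_j^{(v)}$ equals $v_i - \lambda_i(v_1+v_2+v_3)$, and conversely $\lambda_i = (v_i - x_i)/(v_1+v_2+v_3)$ recovers the barycentric coordinates), the translation of ``crossing'' into $\sum_i \min(v_i,w_i) > 0$ plus coordinatewise incomparability is right, and the pigeonhole step correctly identifies the singleton index $k$ with $p_k^{(x)} \in \triangle(y)$; the final inequality $-\sum_{i\neq k} v_i \leq w_k$ is exactly the nonempty-intersection condition, as you observe. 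What your approach buys beyond the paper's is rigor and extra information: it pinpoints \emph{which} vertex lands in the other triangle (the one indexed by the unique coordinate where dominance fails), which is consistent with how the relation $\stackrel{k}{\rightarrow}$ is exploited in Lemmas~\ref{lem:transitive} and~\ref{lem:tail}. The only cosmetic caveat is that when the singleton is $\{i \mid d_i < 0\}$ your symmetry reduction yields $p_k^{(w)} \in \triangle(v)$ rather than $p_k^{(v)} \in \triangle(w)$, but the lemma's conclusion is stated symmetrically in $\{x,y\}=\{v,w\}$, so this is fine.
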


\begin{proof}
Since $v$ and $w$ are crossing,
we have
$A(v) \cap A(w) \neq \emptyset$,
$A(v) \setminus A(w) \neq \emptyset$, and
$A(w) \setminus A(v) \neq \emptyset$.
Then one of the vertices of the triangles $\triangle(v)$ and $\triangle(w)$
is contained in the other triangle, thus the lemma holds.
\end{proof}

\begin{Defi}
For $k \in \{1,2,3\}$,
we define a binary relation $\stackrel{k}{\rightarrow}$ on $\mathcal{H}_+$ by
\[
x \stackrel{k}{\rightarrow} y
\quad \Leftrightarrow \quad
\text{ $x$ and $y$ are crossing, and }  p_k^{(y)} \in \triangle(x)
\]
for any $x, y \in \mathcal{H}_+$.
\end{Defi}

\begin{Lem}\label{lem:transitive}
Let $x,y,z \in \mathcal{H}_+$.
Suppose that $x \stackrel{k}{\rightarrow} y$
and $y \stackrel{k}{\rightarrow} z$ for some $k \in \{1,2,3\}$
and that $x$ and $z$ are crossing.
Then $x \stackrel{k}{\rightarrow} z$.
\end{Lem}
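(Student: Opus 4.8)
The plan is to work in an explicit coordinate description of the triangles inside the plane $\mathcal{H}$. First I would record the identity $p_i^{(v)} = v - s_v e_i$, where $s_v := v_1 + v_2 + v_3 > 0$ and $e_i$ is the $i$-th standard basis vector; this is immediate from the definitions of $p_1^{(v)}, p_2^{(v)}, p_3^{(v)}$. Writing a point $q = \sum_i \lambda_i p_i^{(v)}$ with $\sum_i \lambda_i = 1$ then gives $q_i = v_i - s_v \lambda_i$, so that $\lambda_i = (v_i - q_i)/s_v$ while $\sum_i \lambda_i = 1$ holds automatically for $q \in \mathcal{H}$. Hence one obtains the clean characterizations
\[
\triangle(v) = \{ q \in \mathcal{H} \mid q_i \le v_i \ (i=1,2,3) \}, \qquad A(v) = \{ q \in \mathcal{H} \mid q_i < v_i \ (i=1,2,3) \},
\]
which reduce everything to coordinatewise comparisons. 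In particular, $p_k^{(z)} \in \triangle(x)$ if and only if $(p_k^{(z)})_i \le x_i$ for all $i$, where $(p_k^{(z)})_i = z_i$ for $i \neq k$ and $(p_k^{(z)})_k = z_k - s_z$.

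Next I would translate the two hypotheses into such comparisons. From $x \stackrel{k}{\rightarrow} y$ we have $p_k^{(y)} \in \triangle(x)$, which for $i \neq k$ reads $y_i \le x_i$; from $y \stackrel{k}{\rightarrow} z$ we have $p_k^{(z)} \in \triangle(y)$, which for $i \neq k$ reads $z_i \le y_i$. Combining, $z_i \le x_i$ for every $i \neq k$, so the two coordinates $i \neq k$ of the desired membership $p_k^{(z)} \in \triangle(x)$ are already settled. Thus the whole problem collapses to the single remaining inequality $z_k - s_z \le x_k$, equivalently $x_k + \sum_{i \neq k} z_i \ge 0$.

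The decisive step, and the one I expect to be the crux, is to extract this last inequality from the hypothesis that $x$ and $z$ are crossing; the direct inequalities from the two relations push in the wrong direction and do not yield it. Since $z_i \le x_i$ for $i \neq k$, if we also had $z_k \le x_k$ then $z_i \le x_i$ for all $i$, forcing $A(z) \subseteq A(x)$ and contradicting $A(z) \setminus A(x) \neq \emptyset$; hence $z_k > x_k$. Therefore $\min(x_i,z_i) = z_i$ for $i \neq k$ while $\min(x_k,z_k) = x_k$. Finally, $A(x) \cap A(z) = \{ q \in \mathcal{H} \mid q_i < \min(x_i,z_i) \}$ is a nonempty open triangle, which, by the same coordinate description applied to the vector with entries $\min(x_i,z_i)$, forces $\sum_i \min(x_i,z_i) > 0$. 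That sum equals $x_k + \sum_{i \neq k} z_i$, so the crossing of $x$ and $z$ gives exactly $x_k + \sum_{i \neq k} z_i > 0$, i.e.\ $z_k - s_z < x_k$. Together with the coordinates $i \neq k$ this yields $p_k^{(z)} \in \triangle(x)$, and since $x$ and $z$ are crossing by hypothesis, we conclude $x \stackrel{k}{\rightarrow} z$. The only real subtlety is recognizing that the crossing assumption is not auxiliary but is precisely the ingredient supplying the $k$-th coordinate inequality.
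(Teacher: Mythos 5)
Your proof is correct, and it takes a genuinely different route from the paper's. The paper argues geometrically with the regions $R_i(y)$, $R_{ij}(y)$, $R_j(y)$: the hypothesis $x \stackrel{k}{\rightarrow} y$ keeps the vertices of $\triangle(x)$ out of the union $R_i(y) \cup R_{ij}(y) \cup R_j(y)$, the hypothesis $y \stackrel{k}{\rightarrow} z$ places $p_i^{(z)}$ and $p_j^{(z)}$ inside that union, and the crossing of $x$ and $z$ (via Lemma~\ref{lem:intersecting3}) then forces the remaining vertex $p_k^{(z)}$ into $\triangle(x)$ -- a three-line argument that leaves the reader to fill in the region bookkeeping. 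You instead derive the coordinate description $\triangle(v) = \{q \in \mathcal{H} \mid q_i \le v_i\}$ from the identity $p_i^{(v)} = v - s_v e_i$, which converts every containment into coordinatewise inequalities: the two arrow hypotheses give $z_i \le y_i \le x_i$ for $i \ne k$, the condition $A(z) \not\subseteq A(x)$ gives $z_k > x_k$, and the nonemptiness of $A(x) \cap A(z)$ gives $\sum_i \min(x_i,z_i) = x_k + \sum_{i \ne k} z_i > 0$, which is exactly the missing $k$-th inequality $z_k - s_z < x_k$. Your version is longer to set up but fully self-contained and mechanically checkable, and it makes explicit the point the paper's proof glosses over: the crossing hypothesis on $x$ and $z$ is not merely needed to satisfy the definition of $\stackrel{k}{\rightarrow}$ but is the precise source of the final coordinate inequality. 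The coordinate dictionary you establish would also streamline several other arguments in the paper (e.g.\ Lemmas~\ref{lem:intersecting3} and~\ref{lem:3triangles}).
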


\begin{proof}
Since $x \stackrel{k}{\rightarrow} y$, $p_l^{(x)} \not\in R_i(y) \cup R_{ij}(y) \cup R_j(y)$ for each $l \in \{1,2,3\}$,
where $\{i,j,k\} = \{1,2,3\}$
Since $y \stackrel{k}{\rightarrow} z$, $p_l^{(z)} \in R_i(y) \cup R_{ij}(y) \cup R_j(y)$ for each $l \in \{i,j\}$.
Since $x$ and $z$ are crossing, $p_k^{(z)} \in \triangle(x)$.
\end{proof}

\begin{Defi}
For $k \in \{1,2,3\}$,
a sequence $(v_1, \ldots, v_m)$ of $m$ points in $\mathcal{H}_+$, where $m \geq 2$,
is said to be \emph{consecutively tail-biting in Type $k$}
if $v_i \stackrel{k}{\rightarrow} v_j$ for any $i < j$
(see Figure~\ref{consecutive}).
A finite set $V$ of points in $\mathcal{H}_+$
is said to be \emph{consecutively tail-biting}
if there is an ordering $(v_1, \ldots, v_m)$ of $V$
such that $(v_1, \ldots, v_m)$ is consecutively tail-biting.
\end{Defi}

\begin{figure}
\psfrag{A}{ $A(v_1)$}
\psfrag{B}{ $A(v_2)$}
\psfrag{C}{ $A(v_3)$}
\psfrag{D}{ $A(v_4)$}
\psfrag{E}{(a)}
\psfrag{F}{(b)}
\psfrag{G}{(c)}
\begin{center}
\includegraphics[height=5cm]{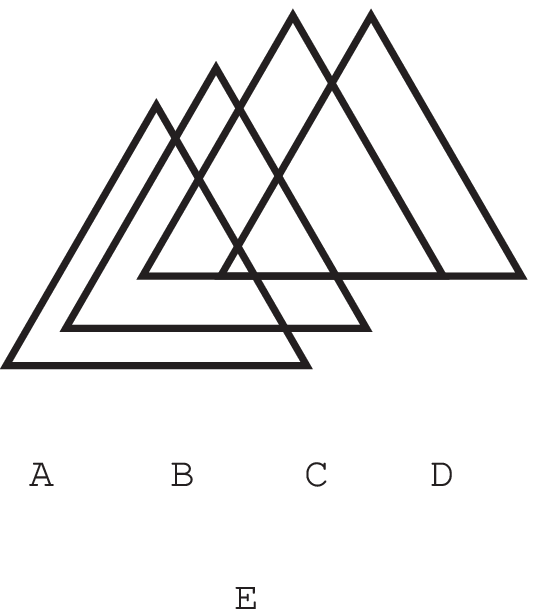} \hskip2.5em
\includegraphics[height=5cm]{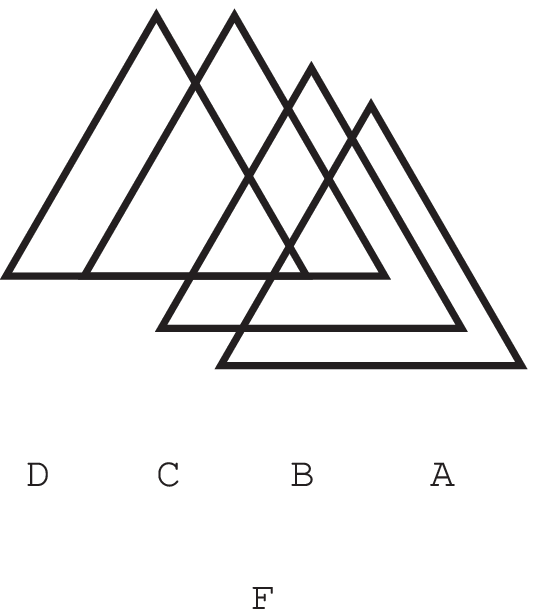} \hskip2.5em
\includegraphics[height=5cm]{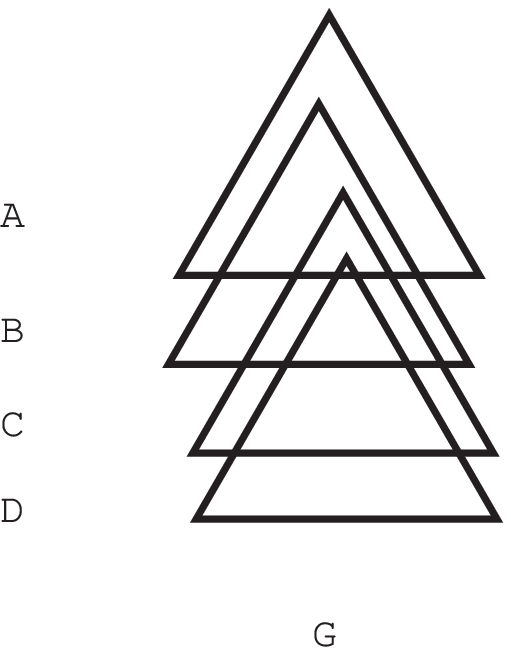}
\end{center}
\caption{The sequences $(v_1,v_2,v_3,v_4)$ in (a), (b), (c) are consecutively tail-biting of Type 1, 2, 3, respectively.}
\label{consecutive}
\end{figure}

\section{The partial order competition dimensions of diamond-free chordal graphs}\label{sec:chordal}

In this section, we show that
a chordal graph has partial order competition dimension at most three
if it is diamond-free.

A \emph{block graph} is a graph such that each of its maximal $2$-connected subgraphs
is a complete graph.
The following is well-known.

\begin{Lem}[\hskip-0.0025em {\cite[Proposition 1]{BM86}}]\label{lem:blockchara}
A graph is a block graph
if and only if the graph is a diamond-free chordal graph.
\end{Lem}

Note that
a block graph having no cut vertex is a disjoint union of complete graphs.
For block graphs having cut vertices, the following lemma holds.

\begin{Lem}\label{lem:blockgraph}
Let $G$ be a block graph having at least one cut vertex.
Then $G$ has a maximal clique that contains exactly one cut vertex.
\end{Lem}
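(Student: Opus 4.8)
The plan is to use the block--cut-vertex tree of $G$. First I would reduce to the connected case. A cut vertex of $G$ is a cut vertex of the connected component $G'$ of $G$ that contains it, and conversely every cut vertex of $G'$ is a cut vertex of $G$; moreover, since there are no edges between components, a maximal clique of $G'$ is a maximal clique of $G$, and all of its vertices lie in $G'$. Hence it suffices to find, inside $G'$, a maximal clique containing exactly one cut vertex, so I may assume from the start that $G$ is connected.

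Next I would identify the maximal cliques of a block graph with its blocks (the maximal $2$-connected subgraphs together with the bridges). Since $G$ is a block graph, each block is a complete subgraph, hence a clique; it is maximal as a clique, because any vertex adjacent to all vertices of a block $B$ with $|B|\ge 2$ would form a triangle with two vertices of $B$ and would therefore lie in the same block. Conversely, a maximal clique has no cut vertex of its own and so is contained in a block, hence equals that block by maximality. Thus proving the lemma amounts to producing a block of $G$ that contains exactly one cut vertex.

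Then I would invoke the structure of the block--cut-vertex tree $T$, whose nodes are the blocks and the cut vertices of $G$, and in which a block $B$ is joined to a cut vertex $c$ precisely when $c \in B$; this $T$ is a tree because $G$ is connected. The key point is that the cut vertices contained in a block $B$ are exactly the neighbours of $B$ in $T$, so the number of cut vertices in $B$ equals the degree of $B$ in $T$. Because every cut vertex lies in at least two blocks, every cut-vertex node of $T$ has degree at least two and hence is never a leaf; consequently all leaves of $T$ are block nodes. Since $G$ has a cut vertex, $G$ has at least two blocks, so $T$ has at least two nodes and therefore possesses a leaf. This leaf is a block $B_0$ of degree one in $T$, that is, a block containing exactly one cut vertex, and by the previous paragraph it is the desired maximal clique.

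The argument is essentially routine once the block--cut-vertex tree is available; the only point demanding care is the translation between \emph{containing a cut vertex} and \emph{being adjacent in $T$}, i.e.\ the identity relating the number of cut vertices in a block to its degree in $T$, together with the verification that in a block graph blocks and maximal cliques coincide. I expect this bookkeeping, rather than any genuine difficulty, to be the main thing to get right.
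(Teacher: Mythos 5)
Your proof is correct, but it takes a genuinely different route from the paper's. The paper does not use the block--cut-vertex tree at all: it looks at the subgraph $H$ induced by the cut vertices of $G$, observes that $H$ is again a block graph and hence chordal, picks a simplicial vertex $v$ of $H$, and then argues that some maximal clique through $v$ contains no other cut vertex --- otherwise two maximal cliques $X_1, X_2$ through $v$ would contain cut vertices $x, y$ that are both neighbours of $v$ in $H$ yet lie in different components of $G-v$ and so are nonadjacent, contradicting simpliciality. Your argument instead reduces to a connected component, identifies the maximal cliques of a block graph with its blocks, and takes a leaf of the block--cut-vertex tree; the translation between ``number of cut vertices in a block'' and ``degree in $T$'', which you flag as the point needing care, is handled correctly, as is the existence of a leaf (a tree on at least two nodes has one, and no cut-vertex node can be a leaf since each cut vertex lies in at least two blocks). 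Your approach is the more standard one and yields slightly more --- every leaf block works, and there are at least two of them --- at the cost of setting up the block--cut tree and the blocks-equal-maximal-cliques identification; the paper's approach stays entirely within the chordal-graph toolkit it already uses elsewhere (simplicial vertices) and needs no auxiliary tree, though it quietly relies on the fact that two distinct maximal cliques of a block graph meeting at $v$ separate into different components of $G-v$. Both are complete proofs.
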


\begin{proof}
Let $H$ be the subgraph induced by the cut vertices of $G$.
By definition, $H$ is obviously a block graph, so $H$ is chordal and there is a simplicial vertex $v$ in $H$.
Since $v$ is a cut vertex of $G$, $v$ belongs to at least two maximal cliques of $G$.
Suppose that each maximal clique containing $v$ contains another cut vertex of $G$.
Take two maximal cliques $X_1$ and $X_2$ of $G$ containing $v$
and let $x$ and $y$ be cut vertices of $G$ belonging to $X_1$ and $X_2$, respectively.
Then both $x$ and $y$ are adjacent to $v$ in $H$.
Since $G$ is a block graph,
$X_1 \setminus \{v\}$ and $X_2 \setminus \{v\}$
are contained in distinct connected components of $G-v$.
This implies that $x$ and $y$ are not adjacent in $H$,
which contradicts the choice of $v$.
Therefore there is a maximal clique $X$ containing $v$
without any other cut vertex of $G$.
\end{proof}


\begin{Lem}\label{lem:blockgraph2}
Every block graph $G$ is the intersection graph
of a family $\mathcal{F}$ of homothetic closed equilateral triangles
in which every clique of $G$ is consecutively tail-biting.
\end{Lem}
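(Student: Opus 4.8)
The plan is to prove this by induction on the number of maximal cliques of $G$, using the structural decomposition of block graphs provided by Lemma~\ref{lem:blockgraph}. First I would set up a realization in which every maximal clique is represented by a nested family of homothetic closed triangles that is consecutively tail-biting: concretely, a clique $\{w_1,\ldots,w_r\}$ can be drawn as $r$ triangles $\triangle(w_1)\supseteq\triangle(w_2)\supseteq\cdots\supseteq\triangle(w_r)$ that all share the same ``corner'' direction, so that for $i<j$ the smaller triangle $\triangle(w_j)$ sits in one of the outer regions $R_k(w_i)$ in the sense required by $w_i\stackrel{k}{\rightarrow}w_j$, while still pairwise intersecting. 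This gives a consecutively tail-biting family (of a single fixed Type $k$) whose intersection graph is exactly $K_r$, handling the base case where $G$ is a disjoint union of complete graphs (no cut vertex).

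For the inductive step I would invoke Lemma~\ref{lem:blockgraph} to peel off a maximal clique $X$ containing exactly one cut vertex $v$. Let $G'$ be the block graph obtained by deleting $X\setminus\{v\}$ from $G$; by the induction hypothesis $G'$ has a triangle realization $\mathcal{F}'$ of the required kind, and in particular $v$ is assigned some triangle $\triangle(v)$. The key move is to attach the remaining vertices of $X$ by placing new, \emph{strictly smaller} homothetic triangles $\triangle(w)$ for each $w\in X\setminus\{v\}$ deep inside $\triangle(v)$, near a corner of $\triangle(v)$ that points into a region of the plane not occupied by any triangle of $\mathcal{F}'$. Because $v$ is the only cut vertex in $X$, the vertices of $X\setminus\{v\}$ are simplicial in $G$, so I only need these new triangles to intersect each other and $\triangle(v)$ (making $X$ a clique) and to avoid every other triangle of $\mathcal{F}'$. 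Choosing them nested and sharing the corner orientation of $\triangle(v)$ makes $X$ consecutively tail-biting with $v$ as the first (largest) element.

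The main obstacle, and where the bulk of the geometric care goes, is guaranteeing the two separation properties simultaneously: the newly inserted triangles must meet \emph{none} of the previously placed triangles except $\triangle(v)$, and must not create any spurious intersection among cliques of $G'$. Here the exclusivity of the cut vertex $v$ is essential — it lets me localize the attachment near a single corner of $\triangle(v)$, and by taking the new triangles small enough (shrinking toward that corner) I can keep them strictly inside $\triangle(v)$ and away from the finitely many other triangles, since those other triangles that meet $\triangle(v)$ do so only through $v$'s own cliques in $G'$, whose triangles I can arrange to approach $\triangle(v)$ from corners distinct from the chosen one. I would argue that such a free corner always exists (at most three cliques of $G'$ meet at $v$ need specific corners, but one can rescale or rotate the construction so that directions never collide), and conclude by verifying that the full family $\mathcal{F}=\mathcal{F}'\cup\{\triangle(w)\mid w\in X\setminus\{v\}\}$ has intersection graph exactly $G$ with every clique consecutively tail-biting.
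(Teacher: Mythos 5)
Your induction skeleton matches the paper's (peel off, via Lemma~\ref{lem:blockgraph}, a maximal clique $X$ whose only cut vertex is $v$, realize the smaller block graph, then attach the simplicial vertices of $X$ near $\triangle(v)$). But the geometric construction you propose does not deliver the tail-biting property, and this is a genuine gap rather than a detail. By definition, $w_i \stackrel{k}{\rightarrow} w_j$ requires $w_i$ and $w_j$ to be \emph{crossing}, i.e.\ $A(w_i)\setminus A(w_j)\neq\emptyset$ \emph{and} $A(w_j)\setminus A(w_i)\neq\emptyset$. A nested chain $\triangle(w_1)\supseteq\cdots\supseteq\triangle(w_r)$ violates this for every pair, so it is never consecutively tail-biting; your description is also internally inconsistent, since a triangle contained in $\triangle(w_i)$ cannot lie in the outer region $R_k(w_i)$. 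The same problem recurs in your inductive step: triangles placed ``deep inside'' and ``strictly inside'' $\triangle(v)$ are not crossing with $\triangle(v)$, so the clique $X$ fails to be tail-biting at its very first pair. The correct picture (Figure~\ref{consecutive}) is a staircase: each $\triangle(w_j)$ has its $k$-th corner inside all the previous triangles but pokes out of them, which is why the paper attaches the new triangles straddling a free segment of positive length on a \emph{side} of $\triangle(w)$, not at an interior corner.

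A second, smaller issue is your claim that ``at most three cliques of $G'$ meet at $v$ need specific corners,'' so a free corner exists. The cut vertex can lie in arbitrarily many maximal cliques, so this count is wrong. The paper's argument instead uses diamond-freeness (Lemma~\ref{lem:blockchara}): if triangles of two distinct maximal cliques at $w$ intersected away from $w$, the corresponding vertices would be adjacent and a diamond would appear; hence those triangles are pairwise disjoint, and since there are finitely many of them, some segment of a side of $\triangle(w)$ is free. That is the separation argument you need in place of the ``free corner'' heuristic. The fix is to replace nesting by the crossing staircase configuration along that free side; with that change your outline becomes essentially the paper's proof.
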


\begin{proof}
We show by induction on the number of cut vertices of $G$.
If a block graph has no cut vertex, then it is a disjoint union of complete graphs and the statement
is trivially true as the vertices of each complete subgraph can be formed as a sequence which is consecutively tail-biting (refer to Figure~\ref{consecutive}).

Assume that the statement is true for any block graph $G$ with $m$ cut vertices where $m \geq 0$.
Now we take a block graph $G$ with $m+1$ cut vertices.
By Lemma~\ref{lem:blockgraph},
there is a maximal clique $X$ that contains exactly one cut vertex, say $w$.
By definition, the vertices of $X$ other than $w$ are simplicial vertices.

Deleting the vertices of $X$ other than $w$
and the edges adjacent to them,
we obtain a block graph $G^*$ with $m$ cut vertices.
Then, by the induction hypothesis, $G^*$ is the intersection graph of a family ${\cal F}^*$
of homothetic closed equilateral triangles satisfying the statement.
We consider the triangles corresponding to $w$.
Let $C$ and $C'$ be two maximal cliques of $G^*$ containing $w$.
By the induction hypothesis,
the vertices of $C$ and $C'$ can be ordered as
$v_{1}, v_{2}, \ldots, v_{l}$ and $v'_{1}, v'_{2}, \ldots, v'_{l'}$, respectively,
so that $v_{i} \stackrel{k}{\rightarrow} v_{j}$ if $i < j$, for some $k \in \{1,2,3\}$ and
that $v'_{i'} \stackrel{k'}{\rightarrow} v'_{j'}$ if $i' < j'$, for some $k' \in \{1,2,3\}$.

Suppose that $\triangle(v_i) \cap \triangle(v'_j) \neq \emptyset$ for $v_i$ and $v'_j$ which are distinct from $w$.
Then $v_i$ and $v'_j$ are adjacent in $G^*$, which implies the existence of a diamond in $G$
since maximal cliques have size at least two.
We have reached a contradiction to Lemma~\ref{lem:blockchara} and so $\triangle(v_i) \cap \triangle(v'_j) = \emptyset$ for any $i,j$.
Therefore there is a segment of a side on $\triangle(w)$ (with a positive length) that does not intersect with the triangle assigned to any vertex in $G^*$ other than $w$
since there are finitely many maximal cliques in $G^*$ that contain $w$.
If the side belongs to $l_{ij}^{(w)}$ for $i,j \in \{1,2,3\}$,
then we may order the deleted vertices and assign the homothetic closed equilateral triangles
with sufficiently small sizes to them
so that the closed neighborhood of $v$ is consecutively tail-biting in Type $k$ for $k \in \{1,2,3\} \setminus \{i,j\}$
and none of the triangles intersects with the triangle corresponding to any vertex other than $w$ in $G^*$.
It is not difficult to see that the set of the triangles in $\mathcal{F}^*$
together with the triangles just obtained is the one desired for $\mathcal{F}$.
\end{proof}

\begin{Thm}
For any diamond-free chordal graph $G$, $\dim_{\text{{\rm poc}}}(G) \leq 3$.
\end{Thm}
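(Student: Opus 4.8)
The plan is to combine the structural characterization of diamond-free chordal graphs with the geometric realization already established in the preceding lemmas. By Lemma~\ref{lem:blockchara}, a graph is a diamond-free chordal graph if and only if it is a block graph. Thus it suffices to prove the statement for block graphs, and this reduction is where the bulk of the work has already been done.

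\medskip

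First I would invoke Lemma~\ref{lem:blockgraph2}, which asserts that every block graph $G$ is the intersection graph of a family $\mathcal{F}$ of homothetic closed equilateral triangles (with the additional consecutively-tail-biting property on each clique, which we will not even need here). In particular, $G$ is realized as the intersection graph of a finite family of homothetic closed equilateral triangles. Second, I would appeal directly to Corollary~\ref{cor:closed}, which says that any graph arising as the intersection graph of such a family has $\dim_{\text{{\rm poc}}}(G) \leq 3$. Chaining these two facts yields $\dim_{\text{{\rm poc}}}(G) \leq 3$ for every block graph $G$.

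\medskip

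To finish, I would translate back through the block-graph characterization: given an arbitrary diamond-free chordal graph $G$, Lemma~\ref{lem:blockchara} guarantees $G$ is a block graph, so the bound just derived applies and $\dim_{\text{{\rm poc}}}(G) \leq 3$, as desired.

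\medskip

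The proof is therefore essentially a two-line citation of the machinery built up in this section, and no genuine obstacle remains at this stage. The substantive difficulty was entirely absorbed into Lemma~\ref{lem:blockgraph2}, whose proof required the inductive peeling of a maximal clique containing a single cut vertex (justified by Lemma~\ref{lem:blockgraph}) together with the careful geometric placement of new triangles along a free segment of a side of $\triangle(w)$ so as to avoid creating spurious intersections. If anything in the final theorem deserves a moment's care, it is merely confirming that the hypothesis ``diamond-free chordal'' is exactly the hypothesis ``block graph'' invoked by Lemma~\ref{lem:blockgraph2}; this is immediate from Lemma~\ref{lem:blockchara}.
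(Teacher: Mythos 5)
Your proposal is correct and follows exactly the paper's own argument: the paper's proof is a one-line citation of Corollary~\ref{cor:closed} and Lemma~\ref{lem:blockgraph2}, with the identification of diamond-free chordal graphs and block graphs via Lemma~\ref{lem:blockchara} implicit. Nothing further is needed.
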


\begin{proof}
The theorem follows from
Corollary~\ref{cor:closed}
and Lemma~\ref{lem:blockgraph2}.
\end{proof}

\section{Chordal graphs having partial order competition dimension greater than three}\label{sec:dimpocmorethanthree}

In this section,
we present infinitely many chordal graphs $G$
with $\dim_{\text{{\rm poc}}}(G) > 3$.
We first show two lemmas which will be repeatedly used
in the proof of the theorem in this section.

\begin{Lem}\label{lem:3triangles}
Let $D$ be a $3$-partial order and
let $G$ be the competition graph of $D$.
Suppose that $G$
contains a diamond $K_4-e$ as an induced subgraph,
where $u$, $v$, $w$, $x$ are the vertices of the diamond and $e=vx$.
If the sequence $(u, v, w)$ is consecutively tail-biting in Type $k$ for some $k \in \{1,2,3\}$,
then $p_i^{(x)} \in R_i(v)$ and $p_j^{(x)} \notin R_j(v)$ hold or $p_i^{(x)} \notin R_i(v)$ and $p_j^{(x)} \in R_j(v)$ hold where $\{i,j,k\} = \{1,2,3\}$.
\end{Lem}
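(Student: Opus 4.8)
The plan is to pass to an explicit coordinate description of all the triangles and regions, and then reduce the whole statement to three short inequality arguments. From the defining formulas for $p^{(v)}_1,p^{(v)}_2,p^{(v)}_3$ one checks directly that $\triangle(v)=\{y\in\mathcal H : y_1\le v_1,\ y_2\le v_2,\ y_3\le v_3\}$, so that $A(v)=\{y\in\mathcal H : y_1<v_1,\ y_2<v_2,\ y_3<v_3\}$, and that $R_k(v)=\{y\in\mathcal H : y_i\ge v_i,\ y_j\ge v_j\}$ for $\{i,j,k\}=\{1,2,3\}$. Since $p^{(x)}_2=(x_1,-x_1-x_3,x_3)$ and $p^{(x)}_3=(x_1,x_2,-x_1-x_2)$, membership $p^{(x)}_i\in R_i(v)$ becomes a pair of coordinate inequalities. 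By the symmetry of the construction under permuting the three coordinates I would assume $k=1$, so that $\{i,j\}=\{2,3\}$; unwinding the definitions, the desired conclusion is then exactly that $x_1\ge v_1$ and that \emph{exactly one} of $x_2\ge v_2$ and $x_3\ge v_3$ holds.

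Next I would read off the inequalities forced by the hypothesis that $(u,v,w)$ is consecutively tail-biting in Type $1$. Each relation $a\stackrel{1}{\to}b$ means $p^{(b)}_1\in\triangle(a)$ together with $a,b$ crossing; the containment gives $b_2\le a_2$ and $b_3\le a_3$, and the crossing condition (which forbids $\triangle(b)\subseteq\triangle(a)$) then forces $a_1<b_1$. Applying this to $u\to v$, $u\to w$, $v\to w$ yields the chains $u_1<v_1<w_1$, $w_2\le v_2\le u_2$, and $w_3\le v_3\le u_3$.

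The engine of the proof is the following observation, used three times. Because $x$ and $v$ are non-adjacent, property $(\star)$ says that no closed family triangle lies in $A(x)\cap A(v)$; hence if I can exhibit a neighbour $z$ of $x$ (or more generally a triangle guaranteed to sit in $A(v)$) with $A(x)\cap A(z)\subseteq A(v)$, then the witness triangle $\triangle(a)\subseteq A(x)\cap A(z)$ supplied by $(\star)$ would lie in $A(x)\cap A(v)$, contradicting $x\not\sim v$. Since $A(x)\cap A(z)=\{y : y_m<\min(x_m,z_m)\ \text{for all }m\}$, the inclusion $A(x)\cap A(z)\subseteq A(v)$ reduces to checking $\min(x_m,z_m)\le v_m$ for each $m$. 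With this, $x_1\ge v_1$ follows by taking $z=w$ (if $x_1<v_1$ then $\min(x_1,w_1)=x_1<v_1$, while $\min(x_m,w_m)\le w_m\le v_m$ for $m=2,3$); the impossibility of both $x_2\ge v_2$ and $x_3\ge v_3$ follows since together with $x_1\ge v_1$ it gives $A(v)\subseteq A(x)$, so the witness triangle for $v\sim w$ lies in $A(v)\cap A(w)\subseteq A(v)\subseteq A(x)$, hence in $A(x)\cap A(v)$; and at least one of $x_2\ge v_2,\ x_3\ge v_3$ follows by taking $z=u$ (if both fail then $\min(x_m,u_m)\le v_m$ for every $m$, using $u_1<v_1\le x_1$ and $x_2<v_2\le u_2$, $x_3<v_3\le u_3$). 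Combining the three facts yields $x_1\ge v_1$ together with exactly one of $x_2\ge v_2$, $x_3\ge v_3$, which is precisely the claimed alternative for $p^{(x)}_i\in R_i(v)$ and $p^{(x)}_j\in R_j(v)$.

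I expect the only real obstacle to be setting up the coordinate dictionary correctly and keeping the orientation of all the inequalities straight; once $\triangle(v)$, $A(v)$, and $R_k(v)$ are expressed by the inequalities above, the argument is driven entirely by the observation that the non-adjacency of $x$ and $v$ is contradicted as soon as some $A(x)\cap A(z)$ is squeezed inside $A(v)$. A point to be careful about is that non-adjacency forbids only a full family triangle inside $A(x)\cap A(v)$, rather than forcing that intersection to be empty; this is exactly why the witness-triangle formulation of $(\star)$, and not a naive disjointness argument, is the right tool, and it is also what keeps the strict-versus-non-strict inequalities harmless, since membership in the open sets always yields the strict chain $y_m<\min(x_m,z_m)\le v_m$.
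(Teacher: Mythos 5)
Your proof is correct and follows essentially the same route as the paper's: the same three forbidden inclusions ($A(x)\cap A(w)\subseteq A(v)$, $A(x)\cap A(u)\subseteq A(v)$, and $A(v)\subseteq A(x)$), each refuted via the witness-triangle form of ($\star$), which is exactly the content of the paper's Lemma~\ref{lem:not-incl1}. The only difference is presentational: you verify the homothety and region facts through the explicit coordinate description $\triangle(v)=\{y\in\mathcal{H} : y_m\le v_m \text{ for all } m\}$, whereas the paper argues synthetically with the regions $R_i(v)$, $R_{12}(v)$.
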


\begin{proof}
Without loss of generality, we may assume that $k=3$.
We first claim that $p_1^{(x)} \in R_1(v) \cup R_2(v) \cup R_{12}(v)$.
Suppose not.
Then $p_1^{(x)} \in R:=\mathcal{H} \setminus (R_1(v) \cup R_2(v) \cup R_{12}(v))$.
Since $A(x)$ and $A(v)$ are homothetic, $A(x) \subseteq R$.
Thus $A(w) \cap A(x) \subseteq A(w) \cap R$.
Since $(u,v,w)$ is consecutively tail-biting in Type 3, $A(w) \cap R \subseteq A(v)$.
Therefore $A(w) \cap A(x)\subseteq A(v)$, which contradicts Lemma~\ref{lem:not-incl1}.
Thus $p_1^{(x)} \in R_1(v) \cup R_2(v) \cup R_{12}(v)$.
By symmetry, $p_2^{(x)} \in R_1(v) \cup R_2(v) \cup R_{12}(v)$.

Suppose that both $p_1^{(x)}$ and $p_2^{(x)}$ are in $R_{12}(v)$.
Since $A(x)$ and $A(v)$ are homothetic, $A(x) \cap R \subseteq A(v)$.
By the hypothesis that $(u,v,w)$ is consecutively tail-biting in Type 3,
we have $A(u) \subseteq R$. Therefore $A(x) \cap A(u) \subseteq A(x) \cap R$.
Thus $A(x) \cap A(u) \subseteq A(v)$, which contradicts Lemma~\ref{lem:not-incl1}.
Therefore $p_1^{(x)} \in R_1(v) \cup R_2(v)$ or $p_2^{(x)} \in R_1(v) \cup R_2(v)$.
Since $p_1^{(x)} \in R_2(v)$ (resp. $p_2^{(x)} \in R_1(v)$) implies
$p_2^{(x)} \in R_2(v)$ (resp. $p_1^{(x)} \in R_1(v)$), which is impossible, we have
$p_1^{(x)} \in R_1(v)$ or $p_2^{(x)} \in R_2(v)$.

Suppose that both $p_1^{(x)} \in R_1(v)$ and $p_2^{(x)} \in R_2(v)$ hold.
Then $A(v) \subseteq A(x)$ since $A(v)$ and $A(x)$ are homothetic.
Then $A(u) \cap A(v) \subseteq A(x)$, which contradicts Lemma~\ref{lem:not-incl1}.
Hence $p_1^{(x)} \in R_1(v)$ and $p_2^{(x)} \notin R_2(v)$ hold or $p_1^{(x)} \notin R_1(v)$ and $p_2^{(x)} \in R_2(v)$ hold.
\end{proof}

\begin{figure}
\psfrag{t}{\small $t$}
\psfrag{u}{\small $u$}
\psfrag{v}{\small $v$}
\psfrag{w}{\small $w$}
\psfrag{x}{\small $x$}
\psfrag{y}{\small $y$}
\begin{center}
\includegraphics{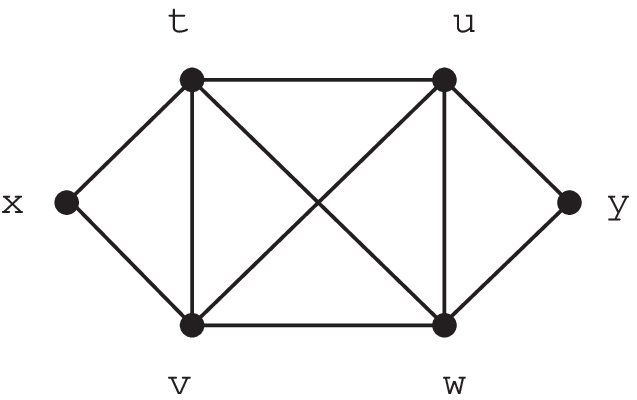}
\end{center}
\caption{The graph $\overline{\mathrm{H}}$}
\label{fig:config}
\end{figure}

Let $\overline{\mathrm{H}}$ be the graph on vertex set $\{t,u,v,w,x,y\}$ such that $\{t,u,v,w\}$ forms a complete graph $K_4$, $x$ is adjacent to only $t$ and $v$, and $y$ is adjacent to only $u$ and $w$ in $\overline{\mathrm{H}}$
(see Figure~\ref{fig:config} for an illustration).

\begin{Lem}\label{lem:4triangles}
Let $D$ be a $3$-partial order and let $G$ be the competition graph of $D$.
Suppose that $G$ contains the graph $\overline{\mathrm{H}}$ as an induced subgraph and
$(t, u, v, w)$ is consecutively tail-biting in Type $k$ for some $k \in \{1,2,3\}$.
Then, for $i,j$ with $\{i,j,k\} = \{1,2,3\}$,
$p_i^{(x)} \in R_i(u)$ implies $p_j^{(y)} \in R_j(v)$.
\end{Lem}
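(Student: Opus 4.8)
The plan is to argue inside the triangle representation of Theorem~\ref{thm:intersectiongeneral}. As in Lemma~\ref{lem:3triangles} I would first reduce to $k=3$; since the relation $\stackrel{3}{\rightarrow}$ and the hypothesis of being consecutively tail-biting in Type $3$ are both symmetric under interchanging the first two coordinates (which swaps $R_1(\cdot)$ with $R_2(\cdot)$ and $p_1^{(\cdot)}$ with $p_2^{(\cdot)}$), it then suffices to treat $(i,j)=(1,2)$, i.e.\ to show that $p_1^{(x)}\in R_1(u)$ implies $p_2^{(y)}\in R_2(v)$. The induced subgraph of $\overline{\mathrm{H}}$ on $\{u,v,w,y\}$ is a diamond with missing edge $vy$, and $(u,v,w)$ is consecutively tail-biting in Type $3$, being a subsequence of $(t,u,v,w)$. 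Applying Lemma~\ref{lem:3triangles} to this diamond, with $v$ playing the role of the middle vertex of the sequence and $y$ the role of the opposite degree-two vertex, I obtain the dichotomy that exactly one of $p_1^{(y)}\in R_1(v)$ and $p_2^{(y)}\in R_2(v)$ holds. Hence it is enough to rule out $p_1^{(y)}\in R_1(v)$, and I would argue by contradiction, assuming it.

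\textbf{Translating to coordinates and producing preys.} Reading off the definitions one has $\triangle(z)=\{q\in\mathcal{H}\mid q_m\le z_m,\ m=1,2,3\}$ and $R_1(z)=\{q\in\mathcal{H}\mid q_2\ge z_2,\ q_3\ge z_3\}$, so the hypothesis $p_1^{(x)}\in R_1(u)$ says $x_2\ge u_2$ and $x_3\ge u_3$, while the assumption $p_1^{(y)}\in R_1(v)$ says $y_2\ge v_2$ and $y_3\ge v_3$. The two remaining edges at $x$ and $y$ now enter. Since $x$ and $v$ are adjacent, Theorem~\ref{thm:intersectiongeneral} gives a vertex $b$ with $\triangle(b)\subseteq A(x)\cap A(v)$, so every point of $\triangle(b)$ has first coordinate below $x_1$, second coordinate below $v_2\le y_2$, and third coordinate below $v_3\le y_3$. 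Symmetrically, since $y$ and $u$ are adjacent there is a vertex $a$ with $\triangle(a)\subseteq A(u)\cap A(y)$, whose points have first coordinate below $y_1$, second coordinate below $u_2\le x_2$, and third coordinate below $u_3\le x_3$.

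\textbf{The contradiction.} It then remains to compare $x_1$ and $y_1$. If $x_1\le y_1$, then every point of $\triangle(b)$ also has first coordinate below $y_1$, so $\triangle(b)\subseteq A(x)\cap A(y)$; if instead $y_1\le x_1$, then every point of $\triangle(a)$ has first coordinate below $x_1$, so $\triangle(a)\subseteq A(x)\cap A(y)$. In either case Theorem~\ref{thm:intersectiongeneral} forces $x$ and $y$ to be adjacent, contradicting the non-adjacency of $x$ and $y$ in $\overline{\mathrm{H}}$. Thus $p_1^{(y)}\in R_1(v)$ is impossible, and the dichotomy obtained from Lemma~\ref{lem:3triangles} leaves $p_2^{(y)}\in R_2(v)$, as desired.

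\textbf{Where the work lies.} The routine part is the bookkeeping: isolating the correct diamond inside $\overline{\mathrm{H}}$, checking that $(u,v,w)$ is tail-biting so that Lemma~\ref{lem:3triangles} applies, and recording the coordinate forms of $\triangle(\cdot)$ and $R_1(\cdot)$. The crux, and the step I expect to carry the real content, is recognizing that the right quantity to play the two edges $xv$ and $yu$ against the single non-edge $xy$ is the first coordinate: the hypothesis on $x$ controls coordinates $2$ and $3$ of the prey of $\{u,y\}$, while the assumption on $y$ controls coordinates $2$ and $3$ of the prey of $\{x,v\}$, so whichever way the comparison of $x_1$ and $y_1$ falls, one of the two preys becomes a common prey of $x$ and $y$. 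This ``opposite-coordinate control'' is exactly what makes the cross-implication $i\neq j$ run, and it is the one place where the tail-biting chain (through Lemma~\ref{lem:3triangles}), both edges, and the non-edge are all used together.
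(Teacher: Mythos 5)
Your proof is correct, and while its skeleton matches the paper's (reduce to $k=3$ and $(i,j)=(1,2)$, apply Lemma~\ref{lem:3triangles} to the diamond $\{u,v,w,y\}$ to get the dichotomy for $y$ against $v$, then rule out $p_1^{(y)}\in R_1(v)$ by contradiction), the way you derive the contradiction is genuinely different from the paper's. The paper stays at the level of region containments: from $p_1^{(x)}\in R_1(u)$ and $p_1^{(y)}\in R_1(v)$ it deduces $A(u)\subseteq A(x)\cup R_{23}(x)$ and $A(v)\subseteq A(y)\cup R_{23}(y)$, proves $A(x)\cap A(v)\subseteq A(y)$ via a nested contradiction against the non-edge $ux$, and then contradicts the non-edge $vy$. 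You instead unwind the definitions into coordinates ($\triangle(z)=\{q\in\mathcal{H}\mid q_m\le z_m\}$, $R_1(z)=\{q\in\mathcal{H}\mid q_2\ge z_2,\ q_3\ge z_3\}$ — both correct), take the preys $b$ of $\{x,v\}$ and $a$ of $\{u,y\}$, and observe that whichever of $x_1\le y_1$ or $y_1\le x_1$ holds, one of $a,b$ becomes a common prey of $x$ and $y$, contradicting the non-edge $xy$. Your version is shorter and more elementary, uses only the non-edge $xy$ (the paper uses $ux$ and $vy$), and in fact never needs the vertex $t$, the diamond $\{t,u,v,x\}$, or the tail-biting of the full quadruple — only $(u,v,w)$, the edges $xv$ and $yu$, and the non-edge $xy$ — so it proves the statement under slightly weaker hypotheses. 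What the paper's region-based argument buys is uniformity with the style of its other geometric lemmas and no reliance on the explicit coordinate model, but your computation is fully justified by the paper's own description of $\triangle(\cdot)$ and $R_k(\cdot)$.
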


\begin{proof}
Without loss of generality, we may assume that $k=3$.
It is sufficient to show that $p_1^{(x)} \in R_1(u)$ implies $p_2^{(y)} \in R_2(v)$.
Now suppose that $p_1^{(x)} \in R_1(u)$.
Since $(t,u,v,w)$ is a tail-biting sequence of Type 3,
$(t,u,v)$ and $(u,v,w)$ are tail-biting sequences of Type 3.
Since $\{t,u,v,x\}$ induces a diamond and
$(t,u,v)$ is a consecutively tail-biting sequence of Type 3,
it follows from Lemma~\ref{lem:3triangles} that
$p_1^{(x)} \in R_1(u)$ and $p_2^{(x)} \not\in R_2(u)$ hold or $p_1^{(x)} \notin R_1(u)$ and $p_2^{(x)} \in R_2(u)$ hold.
Since $p_1^{(x)} \in R_1(u)$, it must hold that $p_1^{(x)} \in R_1(u)$ and $p_2^{(x)} \not\in R_2(u)$.
Since $A(u)$ and $A(x)$ are homothetic and $p_1^{(x)} \in R_1(u)$,
we have $A(u) \subseteq A(x) \cup R_{23}(x)$.

Since $\{u,v,w,y\}$ induces a diamond and
$(u,v,w)$ is a consecutively tail-biting sequence of Type 3, it follows from Lemma~\ref{lem:3triangles} that  $p_1^{(y)} \in R_1(v)$ and $p_2^{(y)} \not\in R_2(v)$ hold or $p_1^{(y)} \not\in R_1(v)$ and $p_2^{(y)} \in R_2(v)$ hold.
We will claim that the latter is true as it implies $p_2^{(y)} \in R_2(v)$.
To reach a contradiction, suppose the former, that is,  $p_1^{(y)} \in R_1(v)$ and $p_2^{(y)} \not\in R_2(v)$.
Since $A(v)$ and $A(y)$ are homothetic and $p_1^{(y)} \in R_1(v)$, we have $A(v) \subseteq A(y) \cup R_{23}(y)$.
We now show that $A(x) \cap A(v) \subseteq A(y)$.
Take any $a \in A(x) \cap A(v)$.
Since $A(v) \subseteq A(y) \cup R_{23}(y)$, we have $a \in A(y) \cup R_{23}(y)$.
Suppose that $a \not\in A(y)$. Then $a \in R_{23}(y)$.
This together with the fact that $a \in A(x)$ 
implies $A(y) \cap R_{23}(x) = \emptyset$.
Since $A(u) \subseteq A(x) \cup R_{23}(x)$, we have
\begin{align*}
A(u) \cap A(y)
&\subseteq (A(x) \cup R_{23}(x)) \cap A(y) \\
&= (A(x) \cap A(y)) \cup (R_{23}(x)) \cap A(y)) \\
&= (A(x) \cap A(y)) \cup \emptyset \\
&= A(x) \cap A(y) \subseteq A(x).
\end{align*}
Therefore $A(u) \cap A(y) \subseteq A(u) \cap A(x)$.
Since $u$ and $y$ are adjacent in $G$,
there exists $b \in V(G)$ such that
$\triangle(b) \subseteq A(u) \cap A(y)$.
Then $\triangle(b) \subseteq A(u) \cap A(x)$,
which is a contradiction to the fact that $u$ and $x$ are not adjacent in $G$.
Thus $a \notin R_{23}(y)$ and so $a \in A(y)$.
Hence we have shown that $A(x) \cap A(v) \subseteq A(y)$.
Since $x$ and $v$ are adjacent in $G$,
there exists $c \in V(G)$ such that
$\triangle(c) \subseteq A(x) \cap A(v)$.
Then $\triangle(c) \subseteq A(v) \cap A(y)$,
which is a contradiction to the fact that $v$ and $y$ are not adjacent in $G$.
Thus we have $p_1^{(y)} \not\in R_1(v)$ and $p_2^{(y)} \in R_2(v)$.
Hence the lemma holds.
\end{proof}

\begin{Defi}\label{def:expansion}
For a positive integer $n$,
let $G_n$ be the graph obtained from the complete graph $K_n$
by adding a path of length $2$
for each pair of vertices of $K_n$,
i.e.,
$V(G_n) = \{ v_i \mid 1 \leq i \leq n \}
\cup \{ v_{ij} \mid 1 \leq i < j \leq n \}$
and
$E(G_n) = \{ v_i v_j \mid 1 \leq i < j \leq n \}
\cup \{ v_i v_{ij} \mid 1 \leq i < j \leq n \}
\cup \{ v_j v_{ij} \mid 1 \leq i < j \leq n \}$.
\end{Defi}

\begin{Defi}
For a positive integer $m$,
the \emph{Ramsey number} $r(m,m,m)$
is the smallest positive integer $r$
such that any $3$-edge-colored complete graph $K_r$ of order $r$
contains a monochromatic complete graph $K_m$ of order $m$.
\end{Defi}


\begin{Lem}\label{lem:tail}
Let $m$ be a positive integer at least $3$ and
let $n$ be an integer greater than or equal to
the Ramsey number $r(m,m,m)$.
If $\dim_{{\rm poc}}(G_n) \leq 3$,
then
there exists a sequence
$(x_1, \ldots, x_m)$ of vertices of $G_n$
such that $\{ x_1, \ldots, x_m \}$
is a clique of $G_n$
and that any subsequence
$(x_{i_1}, \ldots, x_{i_l})$ of $(x_1, \ldots, x_m)$
is consecutively tail-biting,
where $2 \leq l \leq m$
and $1 \leq i_1 < \cdots < i_l \leq m$.
\end{Lem}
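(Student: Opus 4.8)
The plan is to assume $\dim_{\text{{\rm poc}}}(G_n) \leq 3$, work inside a triangle representation of $G_n$, and then combine a ``crossing'' argument for the clique vertices with the Ramsey number to extract a monochromatic clique that is linearly ordered by a single tail-biting relation. First I would invoke Theorem~\ref{thm:intersectiongeneral} together with the remark preceding Lemma~\ref{lem:not-incl1} to fix a family $\{A(v) \mid v \in V(G_n)\}$ of homothetic triangles realizing $G_n$ as the competition graph of a $3$-partial order, with $V(G_n) \subseteq \mathcal{H}_+$. Throughout, I write $v_{ik}$ for the length-$2$ path vertex attached to the pair $\{v_i,v_k\}$.

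The second step is to show that every two vertices $v_i, v_j$ of the underlying clique $\{v_1,\ldots,v_n\}$ are crossing. Since $n \geq r(m,m,m) \geq 3$, I can choose a third index $k \notin \{i,j\}$ and consider the four vertices $v_{ik},\, v_i,\, v_j,\, v_{jk}$. Because each path vertex $v_{ab}$ is adjacent only to $v_a$ and $v_b$, the only edges among these four are $v_{ik}v_i$, $v_iv_j$, and $v_jv_{jk}$; hence $v_{ik}\,v_i\,v_j\,v_{jk}$ is an induced path of length three in $G_n$. Lemma~\ref{lem:not-incl2} then forces $v_i$ and $v_j$ to be crossing.

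The heart of the argument is the Ramsey step. For each edge $\{v_i,v_j\}$ of the clique, Lemma~\ref{lem:intersecting3} yields a $k \in \{1,2,3\}$ with $p_k^{(x)} \in \triangle(y)$ for some labeling $\{x,y\}=\{v_i,v_j\}$, that is, $v_i \stackrel{k}{\rightarrow} v_j$ or $v_j \stackrel{k}{\rightarrow} v_i$; I color $\{v_i,v_j\}$ by one such $k$. This is a $3$-edge-coloring of $K_n$ on $\{v_1,\ldots,v_n\}$, and since $n \geq r(m,m,m)$ it contains a monochromatic $K_m$, say on $X=\{x_1,\ldots,x_m\}$, so that every pair in $X$ is crossing and realizes the same type $k$. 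Restricted to $X$, the relation $\stackrel{k}{\rightarrow}$ is then a tournament (completeness holds because each pair is crossing in type $k$), it is transitive by Lemma~\ref{lem:transitive} (applicable since all pairs of $X$ are crossing), and it is antisymmetric because the corner $p_k^{(\cdot)}$ is the extreme point of a triangle in the $k$-direction, so it cannot lie strictly inside a homothetic triangle whose own $k$-corner lies strictly inside the first. A transitive antisymmetric tournament is a linear order, so I can relabel $X$ as $(x_1,\ldots,x_m)$ with $x_a \stackrel{k}{\rightarrow} x_b$ whenever $a<b$; this sequence is consecutively tail-biting in Type $k$. As $X \subseteq \{v_1,\ldots,v_n\}$, the set $X$ is a clique of $G_n$, and any subsequence $(x_{i_1},\ldots,x_{i_l})$ preserves the order, hence still satisfies $x_{i_a} \stackrel{k}{\rightarrow} x_{i_b}$ for $a<b$ and is itself consecutively tail-biting in Type $k$.

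The main obstacle I anticipate is the final step, namely confirming that $\stackrel{k}{\rightarrow}$ induces a genuine linear order on $X$. Completeness and transitivity are delivered directly by the monochromatic Ramsey clique and by Lemma~\ref{lem:transitive}, but to rule out $2$-cycles (and thereby all directed cycles) one must invoke the geometric extremality of the corner $p_k^{(\cdot)}$; without antisymmetry the tournament need not be acyclic and the desired ordering, and hence the inheritance of the tail-biting property by every subsequence, could fail.
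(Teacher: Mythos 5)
Your proof is correct and follows essentially the same approach as the paper: establish that the clique vertices are pairwise crossing via induced paths of length three and Lemma~\ref{lem:not-incl2}, colour the edges of $K_n$ using Lemma~\ref{lem:intersecting3}, extract a monochromatic $K_m$ by the Ramsey hypothesis, and order it using Lemma~\ref{lem:transitive}. The only (immaterial) difference is the final step: the paper takes a directed Hamiltonian path of the resulting tournament and upgrades the consecutive arcs to all forward arcs via Lemma~\ref{lem:transitive}, whereas you verify antisymmetry of $\stackrel{k}{\rightarrow}$ geometrically and conclude that the complete transitive relation is already a linear order; both arguments are valid.
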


\begin{proof}
Since the vertices $v_i$ and $v_j$ of $G_n$
are internal vertices of an induced path
of length three by the definition of $G_n$,
it follows from Lemma \ref{lem:not-incl2} that
the vertices $v_i$ and $v_j$ of $G_n$
are crossing.
By Lemma \ref{lem:intersecting3}, for any $1 \leq i < j \leq n$,
there exists $k \in \{1,2,3\}$ such that
$v_i \stackrel{k}{\rightarrow} v_j$ or $v_j \stackrel{k}{\rightarrow} v_i$.
Now we define an edge-coloring
$c:\{v_iv_j \mid 1 \leq i<j \leq n\} \to \{1,2,3\}$
as follows:
For $1 \leq i < j \leq n$,
we let $c(v_iv_j) = k$ so that
$v_i \stackrel{k}{\rightarrow} v_j$ or $v_j \stackrel{k}{\rightarrow} v_i$.
Then, by the definition of $r(m,m,m)$,
$K_n$ contains a monochromatic complete subgraph $K$ with $m$ vertices.

Suppose that the edges of $K$ have color $k$, where $k \in \{1,2,3\}$.
We assign an orientation to each edge $xy$ of $K$ so that $x$ goes toward $y$
if $x \stackrel{k}{\rightarrow} y$.
In that way, we obtain a tournament $\overrightarrow{K}$ with $m$ vertices.
It is well-known that every tournament has a directed Hamiltonian path.
Therefore, $\overrightarrow{K}$ has a directed Hamiltonian path.
Let $x_1 \rightarrow x_2 \rightarrow \cdots \rightarrow x_m$ be
a directed Hamiltonian path of $\overrightarrow{K}$.
Then, by Lemma~\ref{lem:transitive}, $x_i \stackrel{k}{\rightarrow} x_j$ for any $i < j$.
Thus
any subsequence
$(x_{i_1}, \ldots, x_{i_l})$ of $(x_1, \ldots, x_m)$
is consecutively tail-biting,
where $2 \leq l \leq m$
and $1 \leq i_1 < \cdots < i_l \leq m$.
\end{proof}

Since the graph $G_n$ is chordal,
the following theorem shows the existence of chordal graphs
with partial order competition dimensions greater than three.
Given a graph $G$ and a set $X$ consisting of six vertices in $G$,
we say that
\emph{$X$ induces an $\overline{\mathrm{H}}$} if it induces a subgraph of $G$ isomorphic to $\overline{\mathrm{H}}$.

\begin{Thm}
For $n \geq r(5,5,5)$, $\dim_{{\rm poc}}(G_n) > 3$.
\end{Thm}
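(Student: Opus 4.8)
The plan is to argue by contradiction. Suppose $\dim_{\text{poc}}(G_n) \leq 3$ for some $n \geq r(5,5,5)$. By Lemma~\ref{lem:tail}, there is a clique $\{x_1, \ldots, x_5\}$ of size $m=5$ in $G_n$ such that the ordered sequence $(x_1, \ldots, x_5)$, \emph{and every subsequence of it}, is consecutively tail-biting in one common Type $k$; without loss of generality take $k=3$. Each $x_p$ is one of the clique vertices $v_{a}$ of the underlying $K_n$, so for each pair $x_p, x_q$ the definition of $G_n$ supplies a length-two path vertex $v_{pq}$ adjacent only to $x_p$ and $x_q$ (renaming indices to match the $x$-ordering). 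The strategy is to extract, from among the five clique vertices, a configuration isomorphic to $\overline{\mathrm{H}}$ together with its two pendant-path vertices, and then apply Lemma~\ref{lem:4triangles} repeatedly to force an impossible chain of containments among the regions $R_1$ and $R_2$ attached to the vertices.

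Concretely, I would proceed as follows. First, for each clique vertex $x_p$ with $2 \leq p \leq 4$ that plays the role of an \emph{internal} vertex $u$ or $v$ of a diamond, Lemma~\ref{lem:3triangles} applies to the relevant tail-biting triple and its pendant vertex: the pendant $y$ of a diamond $\{x_{p-1}, x_p, x_{p+1}, y\}$ satisfies \emph{exactly one} of $p_1^{(y)} \in R_1(x_p)$ or $p_2^{(y)} \in R_2(x_p)$. This gives each such pendant vertex a binary label, say label $1$ or label $2$. Next, I would set up overlapping copies of $\overline{\mathrm{H}}$. For a quadruple $x_p, x_{p+1}, x_{p+2}, x_{p+3}$ (consecutively tail-biting in Type $3$ by the subsequence property), the vertices $x_p, v$-type path vertices between the appropriate pairs realize the roles $t,u,v,w$, while two of the path vertices $v_{ij}$ realize the roles $x$ (adjacent to $t$ and $v$) and $y$ (adjacent to $u$ and $w$). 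Lemma~\ref{lem:4triangles} then says: the label-$1$ situation at one internal vertex \emph{forces} the label-$2$ situation at the next. Chaining these implications across the five clique vertices yields two consecutive internal vertices that are simultaneously forced into incompatible labels, contradicting the exactly-one dichotomy of Lemma~\ref{lem:3triangles}.

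The main obstacle, and the step requiring care, is the bookkeeping that matches the abstract roles $t,u,v,w,x,y$ of $\overline{\mathrm{H}}$ to concrete vertices of $G_n$ for each application of Lemma~\ref{lem:4triangles}. One must verify that the chosen path vertices $v_{ij}$ really are adjacent to exactly the two required clique vertices and nonadjacent to all others, so that the induced subgraph is genuinely $\overline{\mathrm{H}}$ and not something larger; this is where the defining edge set of $G_n$ must be invoked precisely. The reason $m = 5$ (hence $n \geq r(5,5,5)$) is needed rather than a smaller clique is that one needs enough internal vertices in the tail-biting sequence to build a chain of Lemma~\ref{lem:4triangles} implications long enough to close the loop into a contradiction. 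I expect the counting to show that four or five clique vertices suffice to produce two linked copies of $\overline{\mathrm{H}}$ whose forced labels collide, and the remainder of the argument is then the routine contradiction already packaged inside Lemmas~\ref{lem:3triangles} and~\ref{lem:4triangles}.
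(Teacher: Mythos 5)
Your setup matches the paper exactly: argue by contradiction, invoke Lemma~\ref{lem:tail} to extract a $5$-clique $(v_1,\dots,v_5)$ all of whose subsequences are consecutively tail-biting in a common Type (WLOG Type $3$), give each pendant vertex $v_{ac}$ a binary ``label'' at an intermediate clique vertex via the exactly-one dichotomy of Lemma~\ref{lem:3triangles}, and propagate labels with Lemma~\ref{lem:4triangles}. The identification of roles $t,u,v,w,x,y$ with $v_a,v_b,v_c,v_d,v_{ac},v_{bd}$ is indeed the right bookkeeping, and the induced-subgraph verification you flag is immediate from the definition of $G_n$.

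However, the step you defer as ``routine'' --- chaining the implications until two forced labels collide --- is precisely where the naive version fails, and this is a genuine gap. Run your own chain on consecutive quadruples: if $v_{13}$ has label $1$ at $v_2$, then Lemma~\ref{lem:4triangles} on $(v_1,v_2,v_3,v_4)$ gives $v_{24}$ label $2$ at $v_3$, and then $(v_2,v_3,v_4,v_5)$ gives $v_{35}$ label $1$ at $v_4$. The labels simply alternate $1,2,1,\dots$ along consecutive internal vertices and never contradict one another; no collision occurs. The paper's actual contradiction requires two ingredients absent from your sketch. First, it uses \emph{non-consecutive} triples and quadruples --- the diamonds $\{v_1,v_3,v_4,v_{14}\}$ and $\{v_1,v_2,v_4,v_{14}\}$ and the copies of $\overline{\mathrm{H}}$ on $(v_1,v_3,v_4,v_5)$, $(v_1,v_2,v_4,v_5)$, $(v_1,v_2,v_3,v_5)$ --- so that a single pendant such as $v_{14}$ or $v_{25}$ acquires labels at \emph{two different} intermediate vertices; this is why the full subsequence clause of Lemma~\ref{lem:tail} (and hence $m=5$) is needed. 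Second, it needs geometric homothety arguments to transfer information between base vertices: e.g., from $p_2^{(v_{25})}\in R_2(v_3)$ and $p_1^{(v_{25})}\in R_1(v_4)$ one deduces $p_2^{(v_{25})}\in R_2(v_4)$, and from $p_1^{(v_{14})}\in R_1(v_2)$ one deduces $p_1^{(v_{14})}\in R_1(v_3)$; these transfers, not a label collision between copies of $\overline{\mathrm{H}}$ alone, are what finally violate the dichotomy of Lemma~\ref{lem:3triangles}. Without spelling out this case analysis your argument does not close.
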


\begin{proof}
We prove by contradiction.
Suppose that $\dim_{{\rm poc}}(G_n) \leq 3$
for some $n \geq r(5,5,5)$.
By Lemma~\ref{lem:tail},
$G_n$ contains
a consecutively tail-biting sequence $(v_1, \ldots, v_5)$ of five vertices in Type $k$
such that $\{v_1, \ldots, v_5\}$ is a clique of $G_n$
and that
$(v_{i_1}, v_{i_2}, v_{i_3})$
is a consecutively tail-biting sequence for any $1 \leq i_1 < i_2 < i_3 \leq 5$
and
$(v_{i_1}, v_{i_2}, v_{i_3}, v_{i_4})$
is a consecutively tail-biting sequence for any $1 \leq i_1 < i_2 < i_3 < i_4 \leq 5$.
Without loss of generality, we may assume that $k=3$.

Since $\{v_1,v_2,v_3,v_{13}\}$ induces a diamond and
$(v_1,v_2,v_3)$ is a consecutively tail-biting sequence of Type 3,
it follows from Lemma~\ref{lem:3triangles} that
$p_1^{(v_{13})} \in R_1(v_2)$ and $p_2^{(v_{13})} \not\in R_2(v_2)$ hold or $p_1^{(v_{13})} \notin R_1(v_2)$ and $p_2^{(v_{13})} \in R_2(v_2)$ hold.

We first suppose that $p_1^{(v_{13})} \in R_1(v_2)$ and $p_2^{(v_{13})} \not\in R_2(v_2)$.
Since $\{v_1,v_2,v_3,v_4,v_{13},v_{24}\}$ induces an $\overline{\mathrm{H}}$ and
$(v_1,v_2,v_3,v_4)$ is a consecutively tail-biting sequence of Type 3,
it follows from Lemma~\ref{lem:4triangles} and $p_1^{(v_{13})} \in R_1(v_2)$ that
$p_2^{(v_{24})} \in R_2(v_3)$.
Since $\{v_1,v_2,v_3,v_5,v_{13},v_{25}\}$ induces an $\overline{\mathrm{H}}$ and
$(v_1,v_2,v_3,v_5)$ is a consecutively tail-biting sequence of Type 3,
it follows from Lemma~\ref{lem:4triangles} and $p_1^{(v_{13})} \in R_1(v_2)$ that
\begin{equation}\label{eqn:three}
p_2^{(v_{25})} \in R_2(v_3).
\end{equation}
Since $\{v_2,v_3,v_4,v_5,v_{24},v_{35}\}$ induces an $\overline{\mathrm{H}}$ and
$(v_2,v_3,v_4,v_5)$ is a consecutively tail-biting sequence of Type 3,
it follows from Lemma~\ref{lem:4triangles} and $p_2^{(v_{24})} \in R_2(v_3)$ that
\begin{equation}\label{eqn:four}
p_1^{(v_{35})} \in R_1(v_4).
\end{equation}
Since $\{v_1,v_3,v_4,v_{14}\}$ induces a diamond and
$(v_1,v_3,v_4)$ is a consecutively tail-biting sequence of Type 3,
it follows from Lemma~\ref{lem:3triangles} that
$p_1^{(v_{14})} \in R_1(v_3)$ and $p_2^{(v_{14})} \not\in R_2(v_3)$ hold or $p_1^{(v_{14})} \notin R_1(v_3)$ and $p_2^{(v_{14})} \in R_2(v_3)$ hold.
Suppose that $p_1^{(v_{14})} \in R_1(v_3)$ and $p_2^{(v_{14})} \not\in R_2(v_3)$.
Since $\{v_1,v_3,v_4,v_5,v_{14},v_{35}\}$ induces an $\overline{\mathrm{H}}$ and
$(v_1,v_3,v_4,v_5)$ is a consecutively tail-biting sequence of Type 3,
it follows from Lemma~\ref{lem:4triangles} and $p_1^{(v_{14})} \in R_1(v_3)$ that
\begin{equation}\label{eqn:four-2}
p_2^{(v_{35})} \in R_2(v_4).
\end{equation}
Since $\{v_3,v_4,v_5,v_{35}\}$ induces a diamond and
$(v_3,v_4,v_5)$ is a consecutively tail-biting sequence of Type 3,
it follows from Lemma~\ref{lem:3triangles} that
$p_1^{(v_{35})} \in R_1(v_4)$ and $p_2^{(v_{35})} \notin R_2(v_4)$ hold or  $p_1^{(v_{35})} \notin R_1(v_4)$ and $p_2^{(v_{35})} \in R_2(v_4)$ hold,
which is a contradiction to the fact that both (\ref{eqn:four}) and (\ref{eqn:four-2}) hold.
Thus 
\begin{equation}\label{eqn:five}
p_1^{(v_{14})} \not\in R_1(v_3) \text{ and } p_2^{(v_{14})} \in R_2(v_3).
\end{equation}
Since $\{v_1,v_2,v_4,v_{14}\}$ induces a diamond and
$(v_1,v_2,v_4)$ is a consecutively tail-biting sequence of Type 3,
it follows from Lemma~\ref{lem:3triangles} that
$p_1^{(v_{14})} \in R_1(v_2)$ and $p_2^{(v_{14})} \notin R_2(v_2)$ hold or $p_1^{(v_{14})} \notin R_1(v_2)$ and $p_2^{(v_{14})} \in R_2(v_2)$ hold.
Suppose that $p_1^{(v_{14})} \not\in R_1(v_2)$ and $p_2^{(v_{14})} \in R_2(v_2)$.
Since $\{v_1,v_2,v_4,v_5,v_{14},v_{25}\}$ induces an $\overline{\mathrm{H}}$ and
$(v_1,v_2,v_4,v_5)$ is a consecutively tail-biting sequence of Type 3,
it follows from Lemma~\ref{lem:4triangles} and $p_2^{(v_{14})} \in R_2(v_2)$ that
\begin{equation}\label{eqn:six}
p_1^{(v_{25})} \in R_1(v_4).
\end{equation}
By (\ref{eqn:three}) and (\ref{eqn:six}),
since $A(v_4)$ and $A(v_{25})$ are homothetic,
we have
\begin{equation}\label{eqn:six-2}
p_2^{(v_{25})} \in R_2(v_4).
\end{equation}
Since $\{v_2,v_4,v_5,v_{25}\}$ induces a diamond and
$(v_2,v_4,v_5)$ is a consecutively tail-biting sequence of Type 3,
it follows from Lemma~\ref{lem:3triangles} that
$p_1^{(v_{25})} \in R_1(v_4)$ and $p_2^{(v_{25})} \notin R_2(v_4)$ hold or $p_1^{(v_{25})} \notin R_1(v_4)$ and $p_2^{(v_{25})} \in R_2(v_4)$ hold,
which is a contradiction to the fact that both (\ref{eqn:six}) and (\ref{eqn:six-2}) hold.
Thus
$p_1^{(v_{14})} \in R_1(v_2)$ and $p_2^{(v_{14})} \not\in R_2(v_2)$.

Since $A(v_3)$ and $A(v_{14})$ are homothetic,
we have
\begin{equation}\label{eqn:five-2}
p_1^{(v_{14})} \in R_1(v_3),
\end{equation}
contradicting (\ref{eqn:five}).

In the case where $p_1^{(v_{13})} \not\in R_1(v_2)$ and $p_2^{(v_{13})} \in R_2(v_2)$, we also reach a contradiction by applying a similar argument.

Hence, $\dim_{{\rm poc}}(G_n) > 3$ holds for any $n \geq r(5,5,5)$.
\end{proof}


\end{document}